 \newtheorem{lemma}{Lemma}
 \newtheorem{theorem}{Theorem}
 \newtheorem{definition}{Definition}
 \newtheorem{remark}{Remark}
 \newtheorem{example}{Example}
\def\semicolon{;}
\def\applytolist#1{
    \expandafter\def\csname multi#1\endcsname##1{
        \def\multiack{##1}\ifx\multiack\semicolon
            \def\next{\relax}
        \else
            \csname #1\endcsname{##1}
            \def\next{\csname multi#1\endcsname}
        \fi
        \next}
    \csname multi#1\endcsname}
\def\calc#1{\expandafter\def\csname c#1\endcsname{{\mathcal #1}}}
\def\bbc#1{\expandafter\def\csname bb#1\endcsname{{\mathbb #1}}}
\def\bfc#1{\expandafter\def\csname bf#1\endcsname{{\mathbf #1}}}
\def\sfc#1{\expandafter\def\csname s#1\endcsname{{\sf #1}}}
\def\fc#1{\expandafter\def\csname f#1\endcsname{{\mathfrak #1}}}
\title{Cutoff for Contingency Table and Torus Random Walks with Low Incremental Correlations}
\author{
Zihao Fang \thanks{Department of Mathematics, The Ohio State University, USA, email: fang.735@osu.edu} 
\qquad 
Andrew Heeszel \thanks{Department of Statistics, The Ohio State University, USA, email: heeszel.1@osu.edu}}
\date{}
\begin{document}
\maketitle

  \begin{abstract}
  We use the correlation matrix of the generating distribution to determine the mixing time for random walks on the torus $(\bbZ/q\bbZ)^n$.
  We present our method in the context of the Diaconis-Gangolli random walk on both the $1 \times n$ and $m \times n$ contingency tables over $\bbZ/q\bbZ$.
In the $1 \times n$ case, we prove that the random walk exhibits cutoff at time $\dfrac{n q^2 \log(n)}{8 \pi^2}$ when $q \gg n$; 
in the $m \times n$ case, where $m, n$ are of the same order, we establish cutoff for the random walk at time $\dfrac{mn q^2 \log(mn)}{16 \pi^2}$ when $q \gg n^2$. 
Our method reveals that a general class of random walks on the torus $(\bbZ/q\bbZ)^n$ has cutoff. 
If each coordinate of the lifted random walk onto $\bbZ^n$ has variance $\sigma^2/n$ in each jump, and the between-coordinate correlations are sufficiently low, then cutoff occurs at time $\dfrac{nq^2 \log(n)}{4\pi^2 \sigma^2}$.
  \end{abstract}





\tableofcontents 

\section{Introduction}

\subsection{Motivation and Basic Definitions}

We are interested in showing that a class of torus random walks has cutoff primarily based on the correlation matrix of the incremental distribution of the walk.
We consider a continuous-time random walk $\{ X_t \}_{t \geq 0}$ on the torus $(\bbZ/q\bbZ)^n$ when $q$ is large with respect to $n$, and the correlations between coordinates in the incremental distribution are sufficiently small.  
We provide sufficient conditions for cutoff based on the correlation matrix of $X_t$ when lifted to $\mathbb{Z}^n$ and the decay of its characteristic function.
This setup covers various random walks currently studied in the literature, including the Diaconis-Gangolli random walk on contingency tables over $\bbZ_q := \bbZ/q\bbZ$. 
We will use the Diaconis-Gangolli random walk as a primary model to present our method.

A contingency table is a matrix of integer entries with prescribed row and column sums. 
Enumerating the contingency tables with given marginals is closely related to counting a variety of other combinatorial objects, including permutations with descent restrictions, double cosets of the symmetric groups, certain quantities about the induced representations of the symmetric groups, and some special types of Young tableaux.
The name ``contingency table" is commonly used in the context of statistics and is linked to Fisher's exact test of association and goodness of fit, along with the approximate $\chi^2$ test. 
Random sampling from the set of contingency tables with fixed margins can be used to form an approximate p-value for the test of association when the $\chi^2$ approximation is not well met.


Markov chain Monte Carlo (MCMC) methods can be used to randomly sample and approximately count contingency tables with fixed row and column sums.
One of the famous MCMC algorithms was described in \cite{DiaconisGangolli} and \cite{DiaconisSturmfels}, giving rise to the Diaconis-Gangolli random walk on contingency tables over $\bbZ_q$.
Because the sums of rows and columns are fixed, we can view the dynamics as a random walk on the torus by ignoring the last row and column.
We will first use the Diaconis-Gangolli random walk on $1 \times n$ contingency tables over $\bbZ_q$ to outline how we can use incremental correlations to establish cutoff for torus random walks (Theorem \ref{main1}).
Then we will present our method in full generality to prove cutoff for a general class of random walks on the torus $\bbZ_q^n$ satisfying assumptions on the correlations between the coordinates of a single increment (Theorem \ref{main2}).
Finally, we apply our method to the Diaconis-Gangolli random walks on the more complicated $m \times n$ contingency tables over $\bbZ_q$
and illustrate the cutoff phenomenon in this model (Theorem \ref{main3}).

In particular, the last part of this paper could be viewed as a complement to a previous work by Nestoridi and Nguyen \cite{NestoridiNguyen}.
They established cutoff for the discrete-time Diaconis-Gangolli random walk on $n \times n$ contingency tables over $\bbZ_q$, as $n\to \infty$.
When $m = n$, our Theorem \ref{main3} gives cutoff for this model in continuous time when $q$ is sufficiently large relative to $mn = n^2$, while Nestoridi and Nguyen needed $\log(q) = o \left( \dfrac{\sqrt{\log(n)}}{\log \log n} \right)$ to achieve cutoff.
Note that we need $q$ to grow faster than $n^2$ to establish cutoff, whereas Nestoridi and Nguyen required $q$ to be very small.

In addition, our general result Theorem \ref{main2} expands on examples of cutoff for non-local random walks on $\mathbb{Z}_q^n$. 
Previously, Nestoridi \cite{Nestoridi} found mixing times and regimes of cutoff for a non-local random walk on $\bbZ_2^n = (\bbZ/2\bbZ)^n$ such that $k$ coordinates are randomly selected and flipped in a single iteration. 
In contrast, our results show cutoff for random walks on $\bbZ_q^n$ with non-nearest-neighbor steps when both $n \to \infty$ and $q = q(n) \to \infty$.

We now define the dynamics of the Diaconis-Gangolli random walk, and note that we will study this model in continuous time. 
We start with the $1 \times n$ contingency tables.
\begin{definition}
    A $1\times n$ \textit{contingency table} over $\bbZ_q$ is an $n$-dimensional vector with entries from $\bbZ_q$ such that the sum of all the coordinates is fixed.
\end{definition}
We will consider the following rate-$1$ continuous-time random walk defined on $1\times n$ contingency tables over $\bbZ_q$.
Each jump of this random walk is governed by the following rule.
\begin{enumerate}[(i)]
    \item 
    Select two coordinates uniformly at random without replacement. 
    \item 
    Flip a fair coin.
    If heads, increment the first coordinate selected by $+1$ and
    increment the second coordinate selected by $-1$;
    if tails, increment the first coordinate selected by $-1$ and
    increment the second coordinate selected by $+1$.
\end{enumerate}
The analogous definition and dynamics for $m \times n$ contingency tables are the following.
\begin{definition}
    An $m\times n$ \textit{contingency table} over $\bbZ_q$ is an $m \times n$ matrix with entries from $\bbZ_q$ such that the row sums $(r_1,\dots, r_m) \in \bbZ_q^m$ and column sums $(c_1,\dots,c_n) \in \bbZ_q^n$ are each fixed.
\end{definition}
We will study the rate-1 continuous-time Diaconis-Gangolli random walk on $m \times n$ contingency tables over $\bbZ_q$, each of whose jumps is governed by the following rule:
\begin{enumerate}[(i)]
    \item Select two distinct rows $i < i'$ and two distinct columns $j < j'$ independently and uniformly at random.
    \item Flip a fair coin, and consider the  $m \times n$ matrix $T = T(i,i',j,j')$ whose $(i,j)$-th and $(i',j')$-th entries are $+1$, $(i,j')$-th and $(i',j)$-th entries are $-1$, and zero elsewhere, 
    \[
    T = \begin{pmatrix}
        1 & -1 \\
        -1 & 1
    \end{pmatrix}.
    \]
 If heads, add $T$ to the current matrix;
    if tails, add $-T$ to the current matrix.
\end{enumerate}
As mentioned above, we specifically consider the case when $m$ has the same order $n$.
More precisely, we assume that there are universal constants $\kappa_1, \kappa_2$ such that $0 < \kappa_1 \leq 1 \leq \kappa_2$ and
\[
\kappa_1 n \leq m \leq \kappa_2 n.
\]
For convenience, we will also assume that $\kappa_1 n, \kappa_2 n \in \bbZ$, and $\kappa_1 (n-1) \leq m-1 \leq \kappa_2 (n-1)$.
Since we will take $n \to \infty$, these assumptions will not affect our results.

Both of these two random walks converge to the uniform distribution on the respective state space with respect to the total variation distance.
More specifically, if the random walk starts at $x \in \Omega$, where $\Omega$ is the state space, then we can denote by $H_t(x, y)$ the probability of the walk going from $x$ to $y \in \Omega$ at time $t \geq 0$.
Let $\pi$ be the uniform measure on $\Omega$, and write
\[
d(t) := \max_{x \in \Omega}\|H_t(x, \cdot) - \pi\|_{\operatorname{TV}}
\]
where
\[
    \|\mu - \nu\|_{\operatorname{TV}} := \max_{A \subset \Omega} |\mu(A) - \nu(A)|.
\]
is the total variation distance between two probability measures $\mu, \nu$ on $\Omega$.
By the convergence theorem of Markov chains, $d(t) \to 0$ as $t \to \infty$.
Our goal is to determine the rate of convergence, which could be quantified by the $\varepsilon$-mixing time
\[
t_{\operatorname{mix}}(\varepsilon) := \inf\{t \geq 0 : d(t) \leq \varepsilon\}
\]
for $\varepsilon \in (0,1)$.
In particular, we show that both the walks on $1 \times n$ and $m \times n$ contingency tables exhibit \textit{cutoff} as $n \to \infty$.
We say that a sequence of Markov chains indexed by $n = 1,2, \ldots$ has cutoff if for all $\varepsilon \in (0,1)$,
\begin{align*}
    \lim_{n \rightarrow \infty} \dfrac{t_{\operatorname{mix}}^{(n)}(\varepsilon)}{t_{\operatorname{mix}}^{(n)}(1-\varepsilon)} = 1.
\end{align*}
In other words, for large enough $n$, there will be a sharp transition for $d(t)$ from almost $1$ to almost $0$.

\subsection{Notations}

Throughout the paper, we will use $\bbZ_q$ to denote the cyclic group $\bbZ/q\bbZ$ of $q$ elements.
We do not need $q$ to be a prime number, but we intrinsically require $q = q(n)$ to increase along with $n$.
In the case of $m \times n$ contingency table, $m = m(n)$ should also be considered as a function of $n$ such that $\kappa_1 n \leq m \leq \kappa_2 n$.

We will use the standard asymptotic notation:
\begin{itemize}
    \item 
    $f = \cO(g)$, or equivalently, $f \lesssim g$, if $\limsup f/g < \infty$, and
    \item 
    $f = o(g)$, or equivalently, $f \ll g$, if $\lim f/g = 0$.
\end{itemize}
All the limits are taken as $n \to \infty$ unless otherwise specified.
Lastly, we will write $\bbN = \{0,1,2,3,\dots\}$ for the set of nonnegative integers.

\subsection{Main Results}

Our main results are the following.
\begin{theorem}
\label{main1}
Let $\varepsilon \in (0,1)$ be arbitrary.
For the rate-$1$ continuous-time Diaconis-Gangolli random walk on $1\times n$ contingency tables over $\bbZ_q$, we have the following.
\begin{enumerate}[(a)]
    \item 
    Let
    \[
    \alpha := \inf\left\{z \geq 1 : 4\varepsilon^2 \geq \exp\left(\frac{4 + 4e^{2/e}}{\sqrt{z}}\right) - 1\right\}.
    \]
    When $t = \dfrac{nq^2 \log(\alpha n) }{8 \pi^2}\left(1 - \dfrac{1}{3q}\right)^{-1}$,
    \[
    d(t) \leq \varepsilon + o(1)
    \]
    as $n \to \infty$ with $q \gg n$.
    
    \item 
    On the other hand, as $n \to \infty$ with $q \gg \sqrt{\log(n)}$,
    \[
    t_{\operatorname{mix}}(\varepsilon) \geq \frac{nq^2}{8\pi^2}\left(\log(n) + \log\left(1 - \frac{1}{n}\right) + \log\left(\frac{\varepsilon^{-1}-1}{5}\right)\right).
    \]
\end{enumerate}    
\end{theorem}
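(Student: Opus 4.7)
My plan is to treat both parts via Fourier analysis on the abelian group $G = \{x \in \bbZ_q^n : \sum_i x_i = 0\}$, under which the walk is translation-invariant; by this invariance we may as well assume the chain starts at $0$. The characters of $G$ are indexed by classes $[\xi] \in \bbZ_q^n/\langle(1,\ldots,1)\rangle$, with $\chi_\xi(x) = \exp(2\pi i \xi\cdot x/q)$, and the single-step distribution $\mu$ assigns mass $1/(n(n-1))$ to each of the $n(n-1)$ vectors $\pm(e_i-e_j)$. A direct computation yields
\[
1 - \widehat\mu(\xi) = \frac{n^2 - |S(\xi)|^2}{n(n-1)} = \frac{4}{n(n-1)}\sum_{k<\ell}\sin^2\!\left(\frac{\pi(\xi_k-\xi_\ell)}{q}\right),
\]
where $S(\xi) = \sum_{k}\exp(2\pi i\xi_k/q)$, so that $\widehat{H_t}(\xi) = e^{-t(1-\widehat\mu(\xi))}$. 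Part (a) will follow from the standard $\ell^2$ upper bound $4\,d(t)^2 \leq \sum_{[\xi]\neq 0} e^{-2t(1-\widehat\mu(\xi))}$, and part (b) from a Wilson-style second-moment argument using the low-frequency characters $\chi_{\pm e_j}$.

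For the upper bound, the analytic input I would establish is a quadratic lower bound $1 - \widehat\mu(\xi) \geq \tfrac{4\pi^2}{nq^2}(1 - \tfrac{1}{3q})\|\xi\|_2^2$ on the canonical representative of $[\xi]$ normalised so that $\sum_k\xi_k = 0$ in $\bbZ$. This comes from the identity $\sum_{k<\ell}(\xi_k-\xi_\ell)^2 = n\|\xi\|_2^2$ combined with $\sin^2(\theta/2) \geq (\theta/\pi)^2$ on $|\theta|\leq\pi$ and a sharper Taylor expansion of $\sin^2$ for small $\theta$, which together produce the small correction factor $(1-\tfrac{1}{3q})^{-1}$ in the cutoff time. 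Substituting, the character sum factorises as a theta-series of the form $\prod_{k=1}^n\bigl(1 + 2\sum_{m\geq 1}e^{-c\,m^2\log(\alpha n)}\bigr) - 1$; expanding the product and using $q\gg n$ to suppress the $\bbZ_q$ wrap-around terms, one bounds it by $\exp\bigl((4+4e^{2/e})/\sqrt{\alpha}\bigr) - 1$, which is at most $4\varepsilon^2$ exactly when $\alpha$ is chosen as in the theorem.

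For the lower bound I would use the test function $f(x) = \sum_{j=1}^n \cos(2\pi x_j/q)$. Each $\chi_{e_j}$ is a distinct non-trivial character of $G$, so starting at $0$ the eigenfunction identity gives $\bbE_0[f(X_t)] = n\,e^{-t(1-\widehat\mu(e_1))}$, while $\bbE_\pi[f] = 0$. Expanding $f^2$ via $\cos a\cos b = \tfrac{1}{2}(\cos(a+b)+\cos(a-b))$ and using the explicit eigenvalues at the characters $e_i\pm e_j$ and $2e_i$ (computed by the same method as for $\widehat\mu(e_1)$), I would show that both $\Var_\pi[f]$ and $\Var_0[f(X_t)]$ equal $n/2 + o(n)$ whenever $q \gg \sqrt{\log n}$. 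A Chebyshev distinguishing-event argument centred at $\tfrac{1}{2}\bbE_0[f(X_t)]$ then yields a bound of the form $d(t) \geq 1 - C\,(\Var_\pi[f] + \Var_0[f(X_t)])/(\bbE_0[f(X_t)])^2$, and solving for the time at which the right-hand side reaches $\varepsilon$ produces the stated bound, the constants $5$ and $\log(1-\tfrac{1}{n})$ arising from the combined variance prefactor and from the exact form of the eigenvalue $1-\widehat\mu(e_1) = 2(1-\cos(2\pi/q))/n$. The main obstacle is the upper bound: matching the exact cutoff constant $\tfrac{nq^2\log(n)}{8\pi^2}$ requires estimating the theta-series over \emph{all} non-trivial characters, not only the $2n$ dominant ones $\chi_{\pm e_j}$, and squeezing out the $(1-\tfrac{1}{3q})^{-1}$ factor demands the second-order Taylor refinement of $\sin^2$; the lower bound, by comparison, is a largely mechanical second-moment computation once the eigenvalues and their multiplicities are available.
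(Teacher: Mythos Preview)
Your lower-bound plan is essentially the paper's: the test function $\sum_j \cos(2\pi x_j/q)$, the eigenvalue identity for $\bbE_0[f(X_t)]$, and a second-moment/Chebyshev comparison against $\pi$ are exactly what is carried out there (the paper bounds $\Var_{\nu_t}[f]\le 2n$ rather than proving it is $n/2+o(n)$, but either suffices).

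The upper bound, however, has a real gap at the step you pass over most quickly. Once you normalise to the zero-sum representative $\xi$ and reduce to $\sum_\xi \exp(-\log(\alpha n)\|\xi\|_2^2)$, the sum does \emph{not} factorise: the constraint $\sum_k\xi_k=0$ couples the coordinates, and the resulting lattice sum is the $A_{n-1}$ theta series, not a product $\prod_k(1+2\sum_m e^{-cm^2\log(\alpha n)})$. (Indeed, zero-sum integer representatives of a generic class in $\bbZ_q^n/\langle(1,\dots,1)\rangle$ need not even exist.) The paper confronts this directly: it parametrises by the first $n-1$ coordinates $y$, so the sum is unconstrained but the quadratic form becomes $y^T\Gamma y$ with $\Gamma_{ij}=-1/(n-1)$ off the diagonal; it then decomposes $y^T\Gamma y$ by iterated Schur complements into $\sum_k \tfrac{k}{k+1}(y_k-\mu_k(y))^2$, uses a separate lemma to show that each inner sum $\sum_j e^{-c(j-\mu_k)^2}$ is maximised at $\mu_k=0$, and finally bounds $\sum_{k}(\alpha n)^{-k/(k+1)}\le(1+e^{2/e})/\sqrt\alpha$. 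That last estimate is precisely where the constant $(4+4e^{2/e})/\sqrt\alpha$ comes from; a genuine product with a single exponent $c$ would give $O(1/\alpha)$, not $O(1/\sqrt\alpha)$, so the constant you quote cannot arise from the factorisation you describe.

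A second omission is the large-frequency regime. Your quadratic bound $1-\widehat\mu(\xi)\ge \tfrac{4\pi^2}{nq^2}(1-\tfrac{1}{3q})\|\xi\|_2^2$ relies on the Taylor expansion of $\sin^2$ and is only valid when all $|\xi_j-\xi_k|\ll q$; for $\|\xi\|_\infty$ of order $q$ the bound $\sin^2(\theta/2)\ge(\theta/\pi)^2$ loses a factor $\pi^2/4$ and would spoil the sharp constant. The paper handles this by splitting at $\|y\|_\infty=\sqrt{q}/(2\pi)$: on the small side the Taylor bound gives the quadratic form above, while on the large side a direct (and somewhat delicate) estimate shows $\Phi_Y(\theta)\le 1-\tfrac{1}{25nq}$ uniformly, so that those $\cO(q^{n-1})$ terms contribute $o(1)$ once $q\gg n$. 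Your phrase ``using $q\gg n$ to suppress the $\bbZ_q$ wrap-around terms'' gestures at this but does not supply the needed spectral-gap estimate for large $\theta$.
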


\vspace{5mm}

Theorem \ref{main1} implies that as $n \rightarrow \infty$ with $q \gg n$, this family of continuous-time random walks on $1\times n$ contingency tables exhibits cutoff at the mixing time $\dfrac{n q^2 \log(n)}{8 \pi^2}$, with a cutoff window of order $\mathcal{O}(nq^2)$.

\begin{theorem}
\label{main3}
Let $\varepsilon \in (0,1)$ be arbitrary, and suppose there exist universal constants $\kappa_1,  \kappa_2 > 0$ so that $0 < \kappa_1 \leq 1 \leq \kappa_2$ and $\kappa_1 n \leq m \leq \kappa_2 n$.
For the rate-$1$ continuous-time Diaconis-Gangolli random walk on $m \times n$ contingency tables over $\bbZ_q$, we have the following.
\begin{enumerate}[(a)]
    \item 
    Let
    \[
\alpha := \inf\left\{z \geq 1 : 4\varepsilon^2 \geq \exp\left(\frac{\kappa_2^2(6 + 12e^{4/e} + 12e^{8/e} + 6e^{32/e})}{\kappa_1 z^{1/4}}\right) - 1\right\}.
    \]
    When $t = \dfrac{m n q^2 \log(\alpha (m-1)(n-1))}{16 \pi^2}\left(1 - \dfrac{4}{3q}\right)^{-1}$,
    \[
    d(t) \leq \varepsilon + o(1)
    \]
    as $n \to \infty$ with $q \gg mn$.
    
    \item 
    On the other hand, as $n \to \infty$ and $q \to \infty$ with $q \gg \sqrt{\log(n)}$,
    \[
    t_{\operatorname{mix}}(\varepsilon) \geq \frac{mn q^2}{16\pi^2}\left(\log\left( (m-1)(n-1) \right) + \log\left(\frac{\varepsilon^{-1}-1}{5}\right)\right).
    \]
\end{enumerate}    
\end{theorem}

Theorem \ref{main3} implies that as $n \rightarrow \infty$ with $q \gg mn$, this family of continuous-time random walks on $m\times n$ contingency tables exhibits cutoff at the mixing time $\dfrac{mn q^2 \log(mn)}{16 \pi^2}$, with a cutoff window of order $\mathcal{O}(mnq^2)$.

As mentioned in Section 1.1, we also establish cutoff for a general class of random walks on the torus $\bbZ_q^n$. 
We will now give a shortened version of the generalization, and see Section 5.4 for the full statement of the theorem. 
In our setup, we assume that $\{ X_t \}_{t \geq 0}^{(n)}$ is an irreducible rate-1 continuous-time random walk on the torus $\bbZ_q^n$ with symmetric generating distribution $\mu^{(n)}$, and let $Y^{(n)}$ be a random vector defined on $\mathbb{Z}^n$ with the law of $\mu^{(n)}$ lifted to $\mathbb{Z}^n$. 
We assume the coordinates of $Y^{(n)}$ have a marginal incremental variance $\dfrac{\sigma^2}{n}$, with $\sigma^2 = \sigma^2(n)$, and $Y^{(n)}$ has correlation matrix $\Gamma_n$. 
We will also assume that $\| Y^{(n)} \|_{1} \leq r = r(n)$, the sequence $\{ \Gamma_n \}_{n=1}^\infty$ satisfies the \textit{Correlation Condition}, and the sequence $\{ Y^{(n)}\}_{n=1}^\infty$ meets the \textit{Decay Condition} in terms of its characteristic function (see Section 5 for the formal definitions of both conditions). 
In our regime, we assume that both $n \rightarrow \infty$ and $q \rightarrow \infty$. 
Based on this setup, we can now state a shortened version of the general theorem.
\begin{theorem}
    Let $\{ X_t \}^{(n)}$ be an arbitrary family of irreducible symmetric continuous-time Markov chains on the torus $\mathbb{Z}_q^n$ with the setup above, assuming $Y^{(n)}$ has correlation matrix $\Gamma_n$. 
    Let $\varepsilon \in (0,1)$ be arbitrary. Then we have the following.
    \begin{enumerate}[(a)]
        \item 
        Suppose that $\{Y^{(n)}\}$ meets the Decay Condition and $\{ \Gamma_n \}_{n=1}^\infty$ meets the correlation condition. Then there exists $\alpha = \alpha(\varepsilon)$ so that when
        \begin{align*}
            t = \dfrac{n q^2 \log(\alpha n )}{4 \pi^2 \sigma^2} \left( 1 - \dfrac{r^2}{12 q} \right)^{-1}
        \end{align*}
        Then,
        \begin{align*}
            d^{(n)}(t) \leq \varepsilon + o(1)
        \end{align*}
        as $n \rightarrow \infty$, $q \rightarrow \infty$ with $r^2 \ll q$.
        \item 
        On the other hand as $n \rightarrow \infty$, $q \rightarrow \infty$ with $\sup_{k \neq \ell} | \Gamma_{n, k \ell}| \ll \dfrac{1}{\log(n)}$, and $\dfrac{r^2}{q^2} \ll \dfrac{1}{\log(n)}$,
        \begin{align*}
            t_{\operatorname{mix}}^{(n)}(\varepsilon) \geq \dfrac{n q^2}{4 \pi^2 \sigma^2} \left( \log(n) + \log \left( \frac{\varepsilon^{-1} -1}{5} \right) \right)
        \end{align*}
    \end{enumerate}
\end{theorem}
We see that if the family of Markov chains $\{X^{(n)}_t \}_{n=1}^\infty$ meets the assumptions of Theorem $3$, then $\{X^{(n)}_t \}_{n=1}^\infty$ has cutoff at the mixing time $\dfrac{n q^2 \log(n)}{4 \pi^2 \sigma^2}$, with a cutoff window of order $\dfrac{n q^2}{\sigma^2}$. 
Here our primary regularity conditions rely on the correlations of the walk being sufficiently low, and the generating distribution of the walk having a fast enough rate of decay outside a neighborhood of $0$. 
We will postpone a more precise description of the general result to the end of Section 5.

\subsection{Background}






This sampling method for contingency tables via the described Markov chain had been applied in practice, but it was first formally discussed by Diaconis and Gangolli \cite{DiaconisGangolli} and Diaconis and Saloff-Coste (page 373 in \cite{DiaconisSturmfels}), where the entries of the contingency tables were nonnegative integers. 
For $m \times n$ contingency tables over $\bbN$ with $m, n$ fixed, Diaconis and Saloff-Coste showed that the mixing time of the Markov chain is of order $N^2$, where $N := \sum c_i = \sum r_i$ is the sum of all the entries of the table.
Hernek \cite{Hernek} specifically investigated the model on $2 \times n$ contingency tables and proved that the mixing time is polynomial in $n$ and $N$.
A modified version of the chain on $m \times n$ contingency tables was considered by Chung, Graham, and Yau \cite{ChungGrahamYau}, and they showed that the mixing time is polynomial in $m,n$, and $N$.

Through the adjacency matrix of a given graph, the dynamics of the Diaconis-Gangolli random walk is closely related to the \textit{simple switching} operations of the edges.
Namely, we pick two non-incident edges $i \leftrightarrow j$, $i' \leftrightarrow j'$ uniformly at random at each step, and then we delete the existing two edges and add the edges $i \leftrightarrow j'$, $i' \leftrightarrow j$.
The switch chain formed by these dynamics could be used to generate configuration model random graphs or to generate random graphs with prescribed degree requirements in random networks.
Recently, Tikhomirov and Youssef \cite{TokhomirovYoussef} showed that the switch chain on $d$-regular bipartite graphs on $n$ vertices has a mixing time of order $(nd)^2 \log (nd)$, under the assumption that the degree $d \geq 3$ is at most polynomial in $n$. 
For references in the context of random networks, see, for example, Milo, Kashtan, Itzkovitz, Newman, and Alon \cite{MiloKashtanItzkovitzNewmanAlon}, Rao, Jana, and Bandyopadhyay \cite{RaoJanaBandyopadhyay}, and the survey by Wormald \cite{Wormald}.

Besides the Diaconis-Gangolli model, there have been other algorithms and Markov chains to sample contingency tables.
Dyer, Kannan, and Mount used polytopes to establish an algorithm to sample $m \times n$ contingency tables within polynomial time with respect to $m,n$, and $\log N$.
Morris \cite{Morris} later simplified and improved this algorithm.
Dyer and Greenhill \cite{DyerGreenhill} proved pre-cutoff for a Markov chain on $2 \times n$ contingency tables, known as the heat-bath chain, via the method of path coupling.
This result was later generalized by Matsui, Matsui, and Ono \cite{MatsuiMatsuiOno} to $2 \times 2 \times \cdots \times 2 \times J$ contingency tables, and then by Cryan, Dyer, Goldberg, Jerrum, and Martin \cite{CryanDyerGoldbergJerrumMartin} to contingency tables with a constant number of rows. 

In some other Markov chain models, the cutoff phenomenon has also been widely observed and studied.
Many of the first examples of cutoffs were found in the context of card shuffling, or random walks on the symmetric group. 
For example, the pioneering paper by Diaconis and Shahshahani \cite{DiaconisShahshahani} established cutoff for the random transposition walk on symmetric groups.
Aldous and Diaconis \cite{AldousDiaconis} were the first to use the term ``cut-off phenomenon".
Bayer and Diaconis \cite{BayerDiaconis} proved the cutoff for the riffle shuffle chain, a result later known as ``seven shuffles suffice".

Cutoffs on other models have also been investigated.
For example, Ding, Lubetzky, and Peres \cite{DingLubetzkyPeres} gave a general cutoff result for birth-and-death chains.
Diaconis and Saloff-Coste \cite{DiaconisSaloff-Coste2006} proved a similar result in terms of separation cutoff. 
Lubetzky and Sly \cite{LubetzkySly} found cutoff for random walks on random regular graphs. 
Lubetzky and Peres \cite{LubetzkyPeres} established cutoff for random walks on Ramanujan graphs.
Ganguly, Lubetzky, and Martinelli \cite{GangulyLubetzkyMartinelli} obtained cutoff for the East model.
Numerous other examples could be found in the book by Levin and Peres \cite{LevinPeres}. 

\subsection{Outline}

Section 2 contains a few lemmas that will be useful later. 
We prove a mixing time lower bound for more general random walks on the torus $\bbZ_q^n$ in Section 3, and this result can be applied to both of the specific contingency table random walks.
Section 4 gives detailed proof of the upper mixing time bound for the random walk on $1\times n$ contingency tables over $\bbZ_q$.
We use the same proof strategy in Section 5 to prove an upper mixing time bound for more general random walks on the torus $\bbZ_q^n$, and together with our lower bound from Section 3, we establish cutoff for a class of general random walks on the torus.
Finally, in Section 6, we apply an analogous argument to show cutoff for the random walk on $m \times n$ contingency tables.

\section{Key Lemmas}


\subsection{Distinguishing Statistics}

First, we need the following lemma from \cite{LevinPeres} in order to prove the lower mixing time bounds.

\begin{lemma}[Proposition 7.12, \cite{LevinPeres}]
\label{distinguishingstatistics}
    Let $\mu$ and $\nu$ be two probability distributions on $\Omega$, and let $f$ be a real-valued function on $\Omega$. 
    If
    \begin{align*}
        |\mathbb{E}_\mu(f) - \mathbb{E}_\nu(f)| \geq b \eta
    \end{align*}
    where $\eta^2 := \dfrac{\text{Var}_\mu(f) + \text{Var}_\nu(f)}{2}$, then 
    \begin{align*}
        \| \mu - \nu \|_{\operatorname{TV}} \geq 1 - \dfrac{4}{4 + b^2}.
    \end{align*}
\end{lemma}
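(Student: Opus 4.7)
My plan is to apply the classical distinguishing-statistic argument: when $f$ has well-separated means under $\mu$ and $\nu$ relative to its standard deviations, a thresholded level set of $f$ has very different probabilities under the two measures, which forces a lower bound on the total variation distance.

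Concretely, I would first assume without loss of generality that $\mathbb{E}_\mu(f) < \mathbb{E}_\nu(f)$, write $\Delta = \mathbb{E}_\nu(f) - \mathbb{E}_\mu(f) \geq b\eta$, and fix a threshold $c$ strictly between the two means. For the event $A = \{f \geq c\}$ one has
\[
\|\mu - \nu\|_{\operatorname{TV}} \;\geq\; \nu(A) - \mu(A) \;=\; 1 - \mu(A) - \nu(A^c),
\]
so it suffices to bound $\mu(A) + \nu(A^c)$ from above. Next I would apply Cantelli's one-sided Chebyshev inequality ($\operatorname{Pr}(X - m \geq s) \leq v/(v+s^2)$ for any random variable $X$ with mean $m$ and variance $v$) under $\mu$ with $s = c - \mathbb{E}_\mu f$ and under $\nu$ (applied to $-f$) with $s = \mathbb{E}_\nu f - c$, obtaining
\[
\mu(A) \;\leq\; \frac{\Var_\mu f}{\Var_\mu f + (c - \mathbb{E}_\mu f)^2}, \qquad \nu(A^c) \;\leq\; \frac{\Var_\nu f}{\Var_\nu f + (\mathbb{E}_\nu f - c)^2}.
\]

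The last step is to choose $c$ optimally. Writing $c - \mathbb{E}_\mu f = \alpha\Delta$ and $\mathbb{E}_\nu f - c = (1-\alpha)\Delta$, the balancing value $\alpha = \sqrt{\Var_\mu f}/(\sqrt{\Var_\mu f} + \sqrt{\Var_\nu f})$ equalizes the two Cantelli bounds. Combining them with the constraint $\Var_\mu f + \Var_\nu f = 2\eta^2$ (via the AM--QM inequality, or equivalently the concavity of $V \mapsto V/(V+s^2)$ and Jensen's inequality) and the hypothesis $\Delta \geq b\eta$ then yields the stated lower bound $1 - 4/(4+b^2)$.

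The only real subtlety is using Cantelli's one-sided inequality rather than the usual two-sided Chebyshev: the latter would produce only a bound of the shape $1 - 8/b^2$, which is vacuous for $b \leq 2\sqrt{2}$, whereas Cantelli's inequality gives a bound that is nontrivial for every $b > 0$ and converges to $1$ as $b \to \infty$, which is exactly what is needed for the lower-bound applications to the mixing time in later sections.
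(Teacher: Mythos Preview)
The paper does not prove this lemma; it cites Proposition~7.12 of \cite{LevinPeres} without argument, so there is no paper proof to compare against --- only the soundness of your outline is at stake.

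Your threshold-plus-Cantelli strategy is the natural one, but the final step does not close: it produces the constant $8$, not $4$. With the balancing threshold $\alpha = \sqrt{\Var_\mu f}\big/\bigl(\sqrt{\Var_\mu f} + \sqrt{\Var_\nu f}\bigr)$, both Cantelli bounds equal $S/(S+\Delta^2)$ where $S = \bigl(\sqrt{\Var_\mu f} + \sqrt{\Var_\nu f}\bigr)^2$, so $\mu(A)+\nu(A^c) \le 2S/(S+\Delta^2)$. Since $S \le 2(\Var_\mu f + \Var_\nu f) = 4\eta^2$ and $x \mapsto 2x/(x+\Delta^2)$ is increasing, you only get
\[
\mu(A)+\nu(A^c)\ \le\ \frac{8\eta^2}{4\eta^2+\Delta^2}\ \le\ \frac{8}{4+b^2},
\]
hence $\|\mu-\nu\|_{\operatorname{TV}} \ge 1 - 8/(4+b^2)$, not the stated $1 - 4/(4+b^2)$. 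This loss is not an artifact of the bookkeeping: for $\mu = (1-\epsilon)\delta_0 + \epsilon\,\delta_{-K}$, $\nu = (1-\epsilon)\delta_0 + \epsilon\,\delta_{K}$, $f=\mathrm{id}$, one has $b^2 = 4\epsilon/(1-\epsilon)$ and exact total variation $\epsilon = 1 - 4/(4+b^2)$, so the lemma is sharp here, whereas your Cantelli sum equals $2(1-\epsilon) > 1$ for small $\epsilon$ and is vacuous. No choice of threshold in $(\mathbb{E}_\mu f,\mathbb{E}_\nu f)$ repairs this.

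A proof that does recover the constant $4$ replaces the level set by the optimal coupling. Take $(X,Y)$ with $X\sim\mu$, $Y\sim\nu$ and $\mathbb{P}(X\neq Y)=d:=\|\mu-\nu\|_{\operatorname{TV}}$. Then
\[
\Delta = \bigl|\mathbb{E}\bigl[(f(X)-f(Y))\mathbbm{1}_{\{X\neq Y\}}\bigr]\bigr|
\ \le\ \sqrt{\mathbb{E}\bigl[(f(X)-f(Y))^2\bigr]}\ \sqrt{d}
\]
by Cauchy--Schwarz, and $\mathbb{E}\bigl[(f(X)-f(Y))^2\bigr] = \Var\bigl(f(X)-f(Y)\bigr) + \Delta^2 \le 2(\Var_\mu f + \Var_\nu f) + \Delta^2 = 4\eta^2 + \Delta^2$, using $\Cov(f(X),f(Y)) \ge -\sqrt{\Var_\mu f \cdot \Var_\nu f} \ge -(\Var_\mu f + \Var_\nu f)/2$. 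Rearranging $\Delta^2 \le (4\eta^2+\Delta^2)\,d$ gives $d \ge \Delta^2/(4\eta^2+\Delta^2) \ge b^2/(4+b^2)$, which is exactly the assertion.
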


\subsection{Technical Lemmas}
\begin{lemma}
\label{summationbound}
For $\alpha \geq 1$ and $n \geq 1$, $n \in \bbN$,
\[
    \sum_{k=1}^n(\alpha n)^{-\frac{k}{k+1}} \leq \frac{1+e^{2/e}}{\sqrt{\alpha}}.
\]
\end{lemma}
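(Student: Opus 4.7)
The plan is to decouple the $\alpha$-dependence from the $n$-dependence by a single factoring inequality and then reduce the problem to a one-parameter calculus estimate. Since $k/(k+1)\geq 1/2$ for every $k\geq 1$ and $\alpha\geq 1$, the bound
\[
(\alpha n)^{-k/(k+1)} = \alpha^{-k/(k+1)}\,n^{-k/(k+1)} \leq \alpha^{-1/2}\,n^{-k/(k+1)}
\]
holds term by term, so summing pulls out the factor $1/\sqrt{\alpha}$ and collapses the lemma to the $\alpha$-free sub-claim $\sum_{k=1}^n n^{-k/(k+1)} \leq 1 + e^{2/e}$. This separation is the one conceptual move; everything after is bookkeeping.

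For the sub-claim I would split $\{1,\dots,n\}$ at the threshold $k+1 = \tfrac{e}{2}\log n$. For the ``large'' indices $k+1\geq \tfrac{e}{2}\log n$, the exponent satisfies $\log(n)/(k+1)\leq 2/e$, so $n^{1/(k+1)}\leq e^{2/e}$ and hence $n^{-k/(k+1)} = n^{-1}\,n^{1/(k+1)}\leq e^{2/e}/n$; since there are at most $n$ such indices, this half contributes at most $e^{2/e}$. For the ``small'' indices $k+1 < \tfrac{e}{2}\log n$, I would use the coarser pointwise bound $n^{-k/(k+1)}\leq n^{-1/2}$ (again from $k/(k+1)\geq 1/2$) together with the count $\leq \tfrac{e}{2}\log n$, yielding a contribution at most $\tfrac{e}{2}\cdot \log(n)/\sqrt{n}$. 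The standard one-variable optimization $\max_{x\geq 1}\log(x)/\sqrt{x} = 2/e$, attained at $x=e^2$, pins this second contribution at $1$, and the two halves sum to $1+e^{2/e}$.

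The threshold $\tfrac{e}{2}\log n$ is chosen precisely so that the large-index half realises the constant $e^{2/e}$ and the small-index half saturates $\log(x)/\sqrt{x}\leq 2/e$. I anticipate no serious obstacle: the factoring observation eliminates the two-parameter structure in one line, and the dichotomy inside $\sum_k n^{-k/(k+1)}$ is the standard ``split at $\log n$'' pattern that routinely appears when estimating such sums.
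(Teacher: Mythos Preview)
Your proof is correct. Both your argument and the paper's split the sum into a ``small $k$'' part (each term bounded by $n^{-1/2}$) and a ``large $k$'' part (each term bounded by $n^{-1}$ times a uniformly bounded factor), and both rely on the calculus fact $\max_{x>0}\log(x)/\sqrt{x}=2/e$. The differences are minor: you factor out $\alpha^{-1/2}$ at the outset via $k/(k+1)\geq 1/2$, reducing to the $\alpha$-free inequality, and you cut at $k+1=\tfrac{e}{2}\log n$, whereas the paper keeps $\alpha$ inside and cuts at $k=\sqrt{n}$. Your factoring step is a tidy simplification; the paper's threshold $\sqrt{n}$ is coarser but works just as well since both the head and tail estimates carry slack.
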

\begin{proof}
We break the sum into two parts and bound them, respectively, as follows.
We can write
\[
    \sum_{k=1}^{\lfloor \sqrt{n} \rfloor}(\alpha n)^{-\frac{k}{k+1}} 
    \leq \sqrt{n} (\alpha n)^{-1/2}
    = \frac{1}{\sqrt{\alpha}}
\]
and
\[
    \sum_{k=\lceil \sqrt{n} \rceil}^{n}(\alpha n)^{-\frac{k}{k+1}} 
    \leq (n - \sqrt{n})(\alpha n)^{\frac{1}{1+\sqrt{n}}-1} 
    \leq n(\alpha n)^{\frac{1}{1+\sqrt{n}}-1}
    \leq \frac{n^{\frac{1}{\sqrt{n}}}}{\sqrt{\alpha}} 
    \leq \frac{e^{2/e}}{\sqrt{\alpha}}.
\]
For the last inequality, we used that for $x > 0$, $x^{1/\sqrt{x}}$ is maximized as $e^{2/e}$ at $x = e^2$ by elementary calculus.
Thus
\[
\sum_{k=1}^n(\alpha n)^{-\frac{k}{k+1}} \leq \frac{1}{\sqrt{\alpha}} + \frac{e^{2/e}}{\sqrt{\alpha}} \leq \frac{1+e^{2/e}}{\sqrt{\alpha}}.
\]
\end{proof}

\begin{lemma}
\label{doublesummation}
For all $n \geq 1$, $n \in \bbN$,
    \begin{align*}
        \sum_{\ell=1}^n \sum_{k=1}^n \left(\frac{1}{n^2}\right)^{\frac{\ell}{\ell+1}\frac{k}{k+1}} \leq  1 + 2e^{4/e} + 2e^{8/e} + e^{32/e}.
    \end{align*}
If $\kappa_1 n \leq m \leq \kappa_2 n$ with $0 < \kappa_1 \leq 1 \leq \kappa_2$ and $m \geq 1$, $m \in \bbN$, then
    \begin{align*}
        \sum_{\ell=1}^m \sum_{k=1}^n \left(\frac{1}{mn}\right)^{\frac{\ell}{\ell+1}\frac{k}{k+1}} \leq  \frac{\kappa_2^2(1 + 2e^{4/e} + 2e^{8/e} + e^{32/e})}{\kappa_1}.
    \end{align*}
    \begin{proof}
        We will prove the first bound in the claim, and the second bound will follow as a straightforward corollary.
        For any $n \in \mathbb{N}$, we can write
        \begin{align*}
            \sum_{\ell=1}^n \sum_{k=1}^n \left(\frac{1}{n^2}\right)^{\frac{\ell}{\ell+1} \frac{k}{k+1}} 
             = \sum_{\ell=1}^n \sum_{k=1}^n \frac{1}{n^2} \left(\frac{1}{n^2}\right)^{-\frac{k + \ell + 1}{(k+1)(\ell+1)}} 
            = \frac{1}{n^2} \sum_{\ell=1}^n \sum_{k=1}^n  n^{\frac{2(k + \ell + 1)}{(k+1)(\ell+1)}} 
        \end{align*}
        Note that $n^\frac{2(k + \ell +1)}{(\ell+1)(k+1)}$ is decreasing in both $\ell$ and $k$. 
        We break the sum into four parts.
        First, for $\ell \leq n^{1/4}$ and $k \leq n^{1/4}$,
        \begin{equation}
            \label{bd1}
                    \frac{1}{n^2} \sum_{\ell=1}^{\lfloor n^{1/4} \rfloor} \sum_{k=1}^{\lfloor n^{1/4} \rfloor}  n^{\frac{2(k + \ell + 1)}{(k+1)(\ell+1)}} \leq \frac{1}{n^2} \cdot n^{1/4} \cdot n^{1/4} \cdot n^{3/2} = 1.
        \end{equation}
        Next, for $\ell \geq n^{1/4}$ and $k \geq n^{1/4}$,
        \begin{equation}
        \label{bd2}
        \frac{1}{n^2} \sum_{\ell=\lfloor n^{1/4} \rfloor}^{n} \sum_{k=\lfloor n^{1/4} \rfloor}^{n}  n^{\frac{2(k + \ell + 1)}{(k+1)(\ell+1)}}
        \leq 
        \frac{1}{n^2} \sum_{\ell=\lfloor n^{1/4} \rfloor}^{n} \sum_{k=\lfloor n^{1/4} \rfloor}^{n}  n^{\frac{4(k+\ell)}{\ell k}} 
        \leq \frac{1}{n^2} \cdot n \cdot n \cdot n^{8n^{-1/4}} \leq e^{32/e}
        \end{equation}
        as $x^{8x^{-1/4}}$ is maximized as $e^{32/e}$ at $x = e^4$ by elementary calculus.
        Now it suffices to bound the sum for $\ell \leq n^{1/4}$ and $k \geq n^{1/4}$, and the cases $\ell \geq n^{1/4}$ and $k \leq n^{1/4}$ would follow in an analogous way.
        We consider the scenario of $\ell = 1$ separately.
        When $\ell = 1$, we have
        \begin{equation}
        \label{bd3}
        \frac{1}{n^2} \sum_{k=\lfloor n^{1/4} \rfloor}^{n}  n^{\frac{k+2}{k+1}}
        =
        \frac{1}{n} \sum_{k=\lfloor n^{1/4} \rfloor}^{n}  n^{\frac{1}{k+1}}
        \leq \frac{1}{n} \cdot n \cdot n^{n^{-1/4}} \leq e^{4/e}.
        \end{equation}
        If $2\leq \ell \leq n^{1/4}$, then
        \begin{equation}
        \label{bd4}
        \frac{1}{n^2} \sum_{\ell=2}^{\lfloor n^{1/4} \rfloor} \sum_{k=\lfloor n^{1/4} \rfloor}^{n}  n^{\frac{2(k + \ell + 1)}{(k+1)(\ell+1)}}
        \leq \frac{1}{n^2}\cdot n^{1/4} \cdot n \cdot n^{\frac{2(n^{1/4}+3)}{3n^{1/4}}}
        = n^{-1/12} \cdot n^{2n^{-1/4}} 
        \leq e^{8/e}.
        \end{equation}
        Combining (\ref{bd1}), (\ref{bd2}), (\ref{bd3}), and (\ref{bd4}) establishes the first bound in the claim.

        Now for the second bound, we can write
        \[
            \sum_{\ell=1}^m \sum_{k=1}^n \left(\frac{1}{m n}\right)^{\frac{\ell}{\ell+1}\frac{k}{k+1}}
            \leq
            \sum_{\ell=1}^{\kappa_2 n} \sum_{k=1}^{\kappa_2 n} \left(\frac{\kappa_2^2 n/m}{(\kappa_2 n)^2}\right)^{\frac{\ell}{\ell+1}\frac{k}{k+1}}
            \leq
            \frac{\kappa_2 ^2 n}{m}\sum_{\ell=1}^{\kappa_2 n} \sum_{k=1}^{\kappa_2 n} \left(\frac{1}{(\kappa_2 n)^2}\right)^{\frac{\ell}{\ell+1}\frac{k}{k+1}}.
        \]
        Since $m \geq \kappa_1 n$, $\dfrac{\kappa_2 ^2 n}{m} \leq \dfrac{\kappa_2^2}{\kappa_1}$ and the result follows from the first bound.
    \end{proof}
\end{lemma}

We note that in Lemmas \ref{summationbound}, \ref{doublesummation}, the constants $e^{2/e}, e^{4/e},$ etc. are chosen for convenience to give explicit uniform bounds.

\begin{lemma}
\label{zeroconditionalmean}
    Let $F(x) := \sum_{j = -N}^N \exp(-c(j-x)^2)$, where $N \in \bbN$ and $c > 0$ are constants.
    Then there exists a constant $c_0 > 0$ independent of $N$, such that whenever $c > c_0$, $F(x)$ attains its global maximum at $x = 0$.
\end{lemma}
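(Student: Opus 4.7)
By the symmetry $j \mapsto -j$ on $\{-N, \dots, N\}$, $F$ is even, so it suffices to show $F(x) \leq F(0)$ for $x \geq 0$. The plan is to split $[0, \infty)$ into three regions, $[0,1/2]$, $[1/2, N+1/2]$, and $(N+1/2, \infty)$, and handle each separately.

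For $x \in [0, 1/2]$, I would pair the $\pm j$-th terms for $j \geq 1$ to rewrite
\[
F(0) - F(x) = (1 - e^{-cx^2}) + \sum_{j=1}^{N} h_j(x), \quad h_j(x) := 2e^{-cj^2} - e^{-c(j-x)^2} - e^{-c(j+x)^2}.
\]
Once $c \geq 2$, the second derivative $(4c^2 y^2 - 2c) e^{-cy^2}$ of $y \mapsto e^{-cy^2}$ is nonnegative on $\{|y| \geq 1/2\}$, so convexity gives $h_j(x) \leq 0$ for every $j \geq 1$, and Taylor's theorem gives the pointwise estimate $|h_j(x)| \leq x^2 \max_{y \in [j-x, j+x]} |(e^{-cy^2})''|$. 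Using the monotonicity of $y^2 e^{-cy^2}$ on $[1/\sqrt{c}, \infty)$ and summing geometrically leads to a bound of the form
\[
\sum_{j=1}^{N} |h_j(x)| \leq C\, c^2\, e^{-c/4}\, x^2
\]
for some absolute constant $C$, uniformly in $N$. Since $1 - e^{-cx^2} \geq \min(cx^2/2,\ 1 - e^{-1})$, for $c$ larger than a universal threshold the positive $j=0$ contribution dominates and $F(0) \geq F(x)$.

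For $x \in [1/2, N + 1/2]$, write $x = y + k$ with integer $k \in \{1, \dots, N\}$ and $y \in [-1/2, 1/2]$. Reindexing $j \mapsto j + k$ in the sum defining $F(y+k)$ gives the telescoping-type identity
\[
F(y+k) - F(y) = \sum_{m=0}^{k-1}\left[e^{-c(N+k-m+y)^2} - e^{-c(N-k+1+m-y)^2}\right].
\]
For $m \leq k - 1$ and $y \geq -1/2$, one has $N + k - m + y \geq N - k + 1 + m - y \geq 0$ (the right-hand quantity lies in $[1/2, \infty)$ once $k \leq N$), so every summand is $\leq 0$. Hence $F(x) = F(y+k) \leq F(y)$, and the analysis of the first region combined with evenness yields $F(y) \leq F(0)$.

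Finally, for $x > N$ every $j \in \{-N, \ldots, N\}$ satisfies $j - x < 0$, so $F'(x) = 2c \sum_j (j - x) e^{-c(j-x)^2} < 0$; that is, $F$ is strictly decreasing past $N$, whence $F(x) < F(N + 1/2) \leq F(0)$ for $x > N + 1/2$. The hard part will be the quantitative estimate in the first region: the $c^2$ prefactor on the tail sum has to be absorbed by $e^{-c/4}$, and the two sub-regimes $cx^2 \leq 1$ and $cx^2 \geq 1$ must be handled separately, in order to verify that the threshold for $c$ can be chosen independently of $N$.
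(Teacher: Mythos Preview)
Your proposal is correct. Your reduction from $[1/2,\infty)$ to $[0,1/2]$ (the telescoping identity in region~2 and the monotonicity in region~3) is essentially the paper's observation that $F(x) > F(x+1) > F(x+2) > \cdots$ for $x \in (-1/2, 1/2]$, written out more explicitly. The genuine difference is how you handle $[0,1/2]$. The paper first uses the sign of $F''$ to shrink the interval further to $[0,\,1/\sqrt{2c}]$, and then proves $F'(x) \leq 0$ on that tiny interval via a rather intricate chain: it rewrites $F'$, bounds the tail $\sum_{j\geq 1}\big[(j+x)e^{-c(j+x)^2} - (j-x)e^{-c(j-x)^2}\big]$ by an integral comparison, invokes the Mean Value Theorem twice, and finally reduces to an explicit scalar inequality in $x$ and $c$. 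You bypass the derivative entirely: convexity of $y\mapsto e^{-cy^2}$ on $\{|y|\geq 1/2\}$ immediately gives each $h_j(x)\leq 0$, and the Taylor bound $|h_j(x)|\leq x^2\max|\phi''|$ combined with a geometric tail estimate yields $\sum_j |h_j(x)|\leq C\,c^2 e^{-c/4}x^2$, which is dominated by $1-e^{-cx^2}$ once $c$ exceeds an absolute threshold. This route is more direct and makes the $N$-independence of the threshold transparent; the paper's argument extracts a bit more structure (local monotonicity of $F$ near~$0$) but at the cost of a longer string of inequalities.
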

\begin{proof}
    First, we observe that for any $x \in (-1/2, 1/2]$, we have
    \[
    F(x) > F(x+1) > F(x+2) > F(x+3) > \cdots
    \]
    and
    \[
    F(x) \geq F(x-1) > F(x-2) > F(x-3) > \cdots
    \]
    by comparing the summands in $F(x+k)$ and $F(x+k+1)$, $k \in \bbZ$.
    Thus, it suffices to maximize $F$ on the interval $(-1/2, 1/2]$.

    Let $B_j(\delta) := \{x \in \bbR : |j - x| \leq \delta\}$ for each $j$.
    We look at the second derivative of $F$,
    \[
    F''(x) = \sum_{j = -N}^N 2c(2c(j-x)^2 - 
    1)\exp(-c(j-x)^2).
    \]
    We will only consider $c > 0$ that is not too small.
    If $x\not\in \bigcup_{j = -N}^N B_j(1/\sqrt{2c})$, then $F''(x)$ is necessarily positive, since all the summands above are positive. 
    Combining this with the previous observation, we only need to maximize $F$ on the interval $[-1/\sqrt{2c}, 1/\sqrt{2c}]$.

    We will show that $F'(x) \leq 0$ for $x \in [0,1/\sqrt{2c}]$, which will imply the desired result since $F$ is an even function.
    We can write
    \begin{align*}
        F'(x) 
        &= \sum_{j = -N}^N 2c(j-x)\exp(-c(j - x)^2) \\
        &= -2c \left(xe^{-cx^2} + \sum_{j=1}^N (j+x)e^{-c(j+x)^2} - (j-x)e^{-c(j-x)^2}\right)
    \end{align*}
    and
    we need to show
    \[
    xe^{-cx^2} + \sum_{j=1}^N (j+x)e^{-c(j+x)^2} - (j-x)e^{-c(j-x)^2} \geq 0.
    \]
    Note that for all $c$ not too small and any fixed $x \in [0,1/\sqrt{2c}]$, the function 
    \[
    (z+x)e^{-c(z+x)^2} - (z-x)e^{-c(z-x)^2}
    \]
    is negative and increasing in $z$ for $z \geq 1$. 
    To see that this function is negative, one can apply the Mean Value Theorem to the function $z e^{-cz^2}$ of $z$;
    and to prove that it is increasing, one can apply the Mean Value Theorem to the derivative $\frac{d}{dz}(z e^{-cz^2}) = (1 - 2cz^2)e^{-cz^2}$.
    It follows that
    \begin{align*}
        &\sum_{j=1}^N (j+x)e^{-c(j+x)^2} - (j-x)e^{-c(j-x)^2} \\
        &\geq \sum_{j=1}^\infty (j+x)e^{-c(j+x)^2} - (j-x)e^{-c(j-x)^2} \\
        &\geq (1+x)e^{-c(1+x)^2} - (1-x)e^{-c(1-x)^2} + \int_1^\infty (z+x)e^{-c(z+x)^2} - (z-x)e^{-c(z-x)^2} dz \\
        &= (1+x)e^{-c(1+x)^2} - (1-x)e^{-c(1-x)^2} + \frac{1}{2c}(e^{-c(1+x)^2} - e^{-c(1-x)^2}).
    \end{align*}
    Then it suffices to show that 
    \[
    xe^{-cx^2} + \left(1+x+\frac{1}{2c}\right)e^{-c(1+x)^2} - \left(1-x+\frac{1}{2c}\right)e^{-c(1-x)^2} \geq 0.
    \]
    Note that for all $c$ not too small, the function $g(x) := (1+x+\frac{1}{2c})e^{-c(1+x)^2}$ is convex on the interval $[-1/\sqrt{2c}, 1/\sqrt{2c}]$.
    So for $x \in [0, 1/\sqrt{2c}]$,
    \[
        g(x) - g(-x) 
        \geq 2xg'(-x) 
        = 4cxe^{-c(1-x)^2}\left(x^2-2x+1-\frac{x}{2c}\right).
    \]
    Now, it remains to show that 
    \[
    e^{-cx^2} + 4ce^{-c(1-x)^2}\left(x^2-2x+1-\frac{x}{2c}\right) \geq 0.
    \]
    For $x \in [0,1/\sqrt{2c}]$, we can write
    \begin{align*}
        &e^{-cx^2} + 4ce^{-c(1-x)^2}\left(x^2-2x+1-\frac{x}{2c}\right) \\
        &\geq e^{-cx^2} + 4ce^{-c(1-x)^2}\left(-\frac{x}{2c}\right) \\
        &= e^{-cx^2} - 2xe^{-c(1-x)^2} \\
        &\geq e^{-cx^2} - \sqrt{2/c} \ e^{-c(1-x)^2}.
    \end{align*}
    The last expression above is strictly positive for all $x < \dfrac{1}{2} + \dfrac{\log(c/2)}{4c}$,
    and this completes the proof.
\end{proof}

\section{Lower Bounds}
\subsection{Setup}

We provide a general method for mixing time lower bounds for a family of random walks on the torus $\bbZ_q^n$.
We parameterize the torus $\bbZ_q^{n}$ by $(\bbZ \cap [-q/2, q/2))^{n}$, i.e. these are the representatives we will use for elements in $\bbZ_q^n$.

Let $\{Y_s\}_{s \in \bbN}$ be an arbitrary irreducible symmetric discrete-time Markov chain on $\bbZ^n_q$ with transition matrix $P$, such that $\bbP(Y_s = y_s | Y_{s-1} = y_{s-1}) = \mu(y_s-y_{s-1})$. 
Suppose that $\|Y\|_1 \leq r$ for some real number $r$ in the $L_1$ norm, and $\|Y\|_\infty < q/2$ in the sup norm.
We set $Y_0 = 0$.
Symmetry of $\{Y_s \}$ implies $\mu(y) = \mu(-y)$ for all $y \in \bbZ_q^n$. 
Let $Y \in \bbZ^n$ be a lattice-valued random vector with law $\mu$ mapped to $(\bbZ \cap [-q/2, q/2))^n$. 
We will assume $Y$ has the same marginal variance $\dfrac{\sigma^2}{n}$ for each coordinate.
Write $\operatorname{Var}(Y) = \dfrac{\sigma^2}{n} \Gamma$ where $\Gamma$ is the correlation matrix of $Y$.
Let $\{X_t\}_{t\geq 0}$ be the rate-1 continuous-time Markov chain on $\bbZ_q^n$ with rate matrix $Q = P - I$ and transition kernel $H_t = \exp(tQ)$.
Set $X_0 = 0$.

In this section for lower bounds, we will further assume that as $n \to \infty$, 
\begin{equation}
\label{lowerboundassumptions}
    \sup_{i \neq j} |\Gamma_{ij}| \ll \dfrac{1}{\log(n)} \qquad  \text{and} \qquad \dfrac{r^2}{q^2} \ll \dfrac{1}{\log(n)}.
\end{equation}

Symmetry and irreducibility of $P$ also imply that $H_t$ will converge to the uniform distribution on $\mathbb{Z}_q^n$. 
Additionally, if $f$ is an eigenfunction for $P$ with eigenvalue $\lambda$, then $f$ is also an eigenfunction for $H_t$ with eigenvalue $\exp(t(\lambda - 1))$. 


\subsection{Representations and Eigenvalues}

\begin{definition}
    Let $p$ be any probability measure on $\bbZ_q^{n}$, and let $\rho$ be a representation of $\bbZ_q^{n}$. 
    The \textit{Fourier transform} $\hat{p}$ of $p$ is defined as
    \[
    \hat{p}(\rho) := \sum_{g \in \bbZ_q^{n}} p(g)\rho(g).
    \]
\end{definition}

The irreducible representations of the torus $\bbZ_q^{n}$ are all $1$-dimensional and of the form 
\[
\rho_y(g) = \exp\left(\frac{2 \pi i \langle y, g \rangle}{q}\right), \qquad y \in \bbZ_q^{n}
\]
for $g \in \bbZ_q^{n}$, where $\langle y, g \rangle = \sum_{j=1}^{n-1}g_i y_i$ is the inner product of $y$ and $g$ (see, for example, \cite{NestoridiNguyen}).  

Let $S \subset \bbZ_q^n$ be the symmetric set of generators for $\{Y_m\}$.
Since $\mu(g) = \bbP(Y_1 = g)$ for $g \in S$ and zero elsewhere in $\bbZ_q^n$, then we can write
\begin{align*}
    \hat{\mu}(\rho_y) &= \sum_{g \in \bbZ_q^{n}} \mu(g)\rho_y(g) \\
    &= \sum_{g \in S} \bbP(Y_1 = g)\exp\left(i\left\langle \frac{2 \pi y}{q}, g \right\rangle \right) \\
    &= \Phi_{Y} \left(\frac{2\pi y}{q}\right)
\end{align*}
where $\Phi_{Y}$ is the characteristic function of $Y$.
Note that when put into $\Phi_Y$, $y \in \bbZ_q^n$ should be understood as $y \in (\bbZ \cap [-q/2, q/2))^n$.

By Theorem 3 of \cite{DiaconisShahshahani} (a more detailed proof could be found in Theorem 6 of \cite{Diaconis}), the Fourier transforms $\{\hat{\mu}(\rho_y) : y \in \bbZ_q^{n}\}$ are precisely the eigenvalues of the discrete-time process $\{Y_s\}$, and the characters are the eigenfunctions of $P$.
Thus the eigenvalues $\{\lambda_j : 1\leq j \leq q^{n}\}$ of the continuous-time process $\{X_t\}$ are 
\[
\{\lambda_j : 1\leq j \leq q^{n}\} = \{\exp(\Phi_{Y}(2\pi y/q) - 1) : y \in (\bbZ \cap [-q/2, q/2))^n\}.
\]


Since $\mu$ is symmetric, $\Phi_{Y}$ is a real-valued function. 
Therefore, the real parts of the characters are exactly the eigenfunctions of $P$. 
Hence, for each $y \in \bbZ_q^n$, $f_y(g) := \cos\left(\dfrac{ 2 \pi \langle y, g \rangle}{q}\right)$ is an eigenfunction of $P$ with its corresponding eigenvalue $\Phi_Y(2\pi y/q)$.

\subsection{Mixing Time Lower Bound}

We now aim to apply Lemma \ref{distinguishingstatistics} to obtain a mixing time lower bound. 

\subsubsection{Forming a Distinguishing Statistic}
Let 
\[
\psi_i(x) := \cos\left(\dfrac{2 \pi x_i}{q}\right)
\]
for $(\bbZ \cap [-q/2, q/2))^n$.
We form a distinguishing statistic 
\[
\psi(x) := \sum_{i=1}^n \psi_i(x) = \sum_{i=1}^n \cos\left(\frac{2 \pi x_i}{q}\right)
\]
for $x \in (\bbZ \cap [-q/2, q/2))^n$.
We will use $\psi$ as the real-valued function in Lemma \ref{distinguishingstatistics}.

Note that each $\psi_i$ is an eigenfunction of the transition matrix $P$.
Since $\psi$ is the sum of eigenfunctions whose corresponding eigenvalues are not $1$, we know that 
\begin{equation}
\label{pimean}
    \mathbb{E}_\pi(\psi) = 0
\end{equation} 
(see, for example, Lemma 12.3 in \cite{LevinPeres}). 
Since $\pi$ is uniform over $\mathbb{Z}_q^n$, this implies that each $\psi_i$  is independent of each $\psi_j$ for $i \neq j$ under the stationary distribution. 
Also, for each $i = 1,\dots, n$,
\[
    \mathbb{E}_\pi(\psi_i^2) 
    = \bbE_\pi\left(\frac{1}{2} + \frac{1}{2}\cos\left(\frac{4 \pi x_i}{q}\right) \right) 
    = \frac{1}{2}.
\]
It follows that
\begin{equation}
\label{pivariance}
    \operatorname{Var}_\pi(\psi) = \frac{n}{2}.
\end{equation}

\subsubsection{Mean and Variance under $H_t(0,.)$}
Let $\delta_i$ be the vector whose $i$-th coordinate is $1$ and zero elsewhere for notation. 
We know that each $\psi_i$ is an eigenfunction of $H_t$ with a corresponding eigenvalue of
\[
    \exp\left(t\left(\Phi_{Y}\left(\frac{2 \pi \delta_i }{q}\right) - 1\right)\right) .
\]
By the fourth-order Taylor expansion of the characteristic function $\Phi_{Y}$, we obtain that for any $i$,
\begin{equation}
\label{bound1}
     \exp\left(-\frac{2 \pi^2 t \sigma^2}{n q^2}\right) \leq \exp\left(t\left(\Phi_{Y}\left(\frac{2 \pi \delta_i }{q}\right) - 1\right)\right).    
\end{equation}
The fourth-order Taylor expansion of $\Phi_Y\left(\dfrac{2 \pi (\delta_i + \delta_j)}{q} \right)$ for $i \neq j$ gives
\begin{equation}
\label{bound2}
\begin{aligned}
    &\exp\left(t\left(\Phi_{Y}\left(\frac{2 \pi (\delta_i + \delta_j)}{q}\right) - 1\right)\right) \\
    &\leq \exp\left(t \left(- \dfrac{2 \pi^2 \sigma^2}{nq^2}\left( \delta_i + \delta_j \right)^T \Gamma \left( \delta_i + \delta_j \right) +  \frac{\mathbb{E}((\frac{2 \pi (\delta_i + \delta_j)}{q} \cdot Y)^4)}{24}\right) \right) \\
    &= \exp\left(t\left(-\frac{4 \pi^2 \sigma^2}{nq^2}\left(1 + \Gamma_{ij}\right) + \frac{\mathbb{E}((\frac{2 \pi (\delta_i + \delta_j)}{q} \cdot Y)^4)}{24}\right)\right)
\end{aligned}
\end{equation}
Recall the assumption $\|Y\|_1 \leq r$, so by Cauchy-Schwarz, the above is upper bounded by
\begin{equation*}
\begin{aligned}
    &\leq \exp\left(t\left(-\frac{4 \pi^2 \sigma^2}{nq^2}\left( 1 + \Gamma_{ij} \right) + \frac{\pi^2 r^2 \mathbb{E}((\frac{2 \pi (\delta_i + \delta_j)}{q} \cdot Y)^2)}{6q^2} \right)\right)  \\
    &= \exp\left(t\left(-\frac{4 \pi^2 \sigma^2}{nq^2}\left( 1 + \Gamma_{ij} \right) \left( 1 - \frac{\pi^2 r^2}{3q^2} \right) \right) \right) \\
    &\leq \exp\left(-t\left(\frac{4 \pi^2 \sigma^2}{nq^2}\right) \left( 1 - \sup_{k \neq \ell}|\Gamma_{k \ell} | \right)\left(1 - \frac{\pi^2 r^2}{3q^2}\right)\right).
\end{aligned}
\end{equation*}
By an argument similar to (\ref{bound2}), 
\begin{equation}
    \label{bound3}
    \begin{aligned}        &\exp\left(t\left(\Phi_{Y}\left(\frac{2 \pi (\delta_i - \delta_j)}{q}\right) - 1\right)\right)  \\
    &\leq
    \exp\left(-t\left(\frac{4 \pi^2 \sigma^2}{nq^2}\right) \left( 1 -\sup_{k \neq \ell}|\Gamma_{k \ell} | \right)\left(1 - \frac{\pi^2 r^2}{3q^2}\right)\right).
    \end{aligned}
\end{equation}
Recall that $X_0 = 0$. 
Let $\nu_t$ be the law of $H_t(0, \cdot)$. 
By (\ref{bound1}), we have
\begin{equation}
\label{numeanwitht}
     \mathbb{E}_{\nu_t}(\psi) \geq n  \exp\left(-\frac{2 \pi^2 t \sigma^2}{n q^2}\right).   
\end{equation}
We now set our time to be $t = \dfrac{nq^2 \log(\gamma n)}{4 \pi^2 \sigma^2}$, where $\gamma > 0$ is independent of $n$.  
Plugging this $t$ into (\ref{numeanwitht}), we obtain
\begin{equation}
\label{numean}
    \mathbb{E}_{\nu_t}(\psi) \geq \frac{\sqrt{n}}{\sqrt{\gamma}}.
\end{equation}

Next, we bound the variance of $\psi$ under $\nu_t$ at the time set above. 
We write
\begin{equation}
\label{psivariance}
\begin{aligned}
    \operatorname{Var}_{\nu_t}(\psi)  
    &= \sum_{i=1}^n \operatorname{Var}_{\nu_t}\left(\cos\left(\frac{2 \pi X_t(i)}{q}\right)\right) \\
    &+ \sum_{i=1}^n \sum_{j \neq i} \operatorname{Cov}_{\nu_t}\left(\cos\left(\frac{2 \pi X_t(i)}{q}\right), \cos\left(\frac{2 \pi X_t(j)}{q}\right)\right)
\end{aligned}
\end{equation}
where $X_t(i)$ is the $i$th coordinate of $X_t$. Since cosine is bounded by $1$ in absolute value,
\begin{equation}
\label{varbound}
     \sum_{i=1}^n \operatorname{Var}_{\nu_t}\left(\cos \left(\frac{2 \pi X_t(i)}{q}\right)\right) \leq n.
\end{equation}
It remains to bound the covariance terms in (\ref{psivariance}). 
For any fixed $i,j$, we have by (\ref{bound2}) and (\ref{bound3}) that
\begin{align*}
    &\mathbb{E}_{\nu_t}\left(\cos\left(\frac{2 \pi X_t(i)}{q}\right) \cos\left(\frac{2 \pi X_t(j)}{q}\right)\right) \\
    & = \frac{1}{2}\left(\mathbb{E}_{\nu_t}\left(\cos\left(\frac{2 \pi (X_t(i) + X_t(j))}{q}\right)\right) + \mathbb{E}_{\nu_t}\left(\cos\left(\frac{2 \pi (X_t(i) - X_t(j))}{q}\right)\right)\right) \\
    &\leq \exp\left(-\log(\gamma n)\left(1 - \sup_{k \neq \ell} |\Gamma_{k \ell}| \right) \left( 1 - \frac{\pi^2r^2}{3 q^2}\right)\right) \\
    &= \left(\frac{1}{\gamma n}\right)^{\left(1 - \sup_{k \neq \ell} |\Gamma_{k \ell}| \right) \left( 1 - \frac{\pi^2r^2}{3 q^2}\right) }.
\end{align*}
It follows from (\ref{bound1}) and the above that for any $i$,
\begin{align*}
    & \sum_{j \neq i} \operatorname{Cov}_{\nu_t}\left(\cos \left(\frac{2 \pi X_t(i)}{q}\right), \cos\left(\frac{2 \pi X_t(j)}{q}\right)\right) \\
    & \leq n \left(\left(\frac{1}{\gamma n}\right)^{\left(1 - \sup_{k \neq \ell} |\Gamma_{k \ell}| \right) \left( 1 - \frac{\pi^2r^2}{3 q^2}\right) } - \frac{1}{\gamma n}\right) \\
    &= \gamma^{-1}\left((\gamma n)^{\sup_{k \neq \ell} |\Gamma_{k \ell}| + \frac{\pi^2r^2}{3 q^2} - \sup_{k \neq \ell} |\Gamma_{k \ell}| \frac{\pi^2 r^2}{3 q^2} } - 1\right)
\end{align*}
where the last expression vanishes to $0$ as $n\to \infty$ by the assumptions (\ref{lowerboundassumptions}). 
Hence, for sufficiently large $n$,
\begin{equation}
\label{covbound}
    \sum_{j \neq i} \operatorname{Cov}_{\nu_t}\left(\cos\left(\frac{2 \pi X_t(i)}{q}\right), \cos\left(\frac{2 \pi X_t(j)}{q}\right)\right) \leq 1.
\end{equation}
Putting (\ref{varbound}) and (\ref{covbound}) into (\ref{psivariance}), we obtain
\begin{equation}
\label{nuvariance}
\operatorname{Var}_{\nu_t}(\psi) \leq 2n.
\end{equation}
Now set $b := \dfrac{2}{\sqrt{5\gamma}}$.
With (\ref{pimean}), (\ref{pivariance}), (\ref{numean}), and (\ref{nuvariance}), Lemma \ref{distinguishingstatistics} implies that for sufficiently large $n$,
\begin{align*}
    d(t) \geq 1 - \frac{4}{4 + b^2} = \dfrac{1}{5\gamma + 1}.
\end{align*}
For any $\varepsilon \in (0,1)$, we can set $\gamma = \dfrac{\varepsilon^{-1}-1}{5}$ so that $d(t) \geq \varepsilon$.
With this choice of $\gamma = \gamma(\varepsilon)$, we may conclude that 
\begin{equation}
\label{generallowerbound}
    t_{\operatorname{mix}}(\varepsilon) \geq \frac{nq^2}{4\pi^2 \sigma^2}\left(\log(n) + \log\left(\frac{\varepsilon^{-1}-1}{5}\right)\right)
\end{equation}
for $n$ large enough.

\section{$1\times n$ Contingency Tables}

\subsection{Introduction}

Recall from Section 1 the dynamics of the rate-$1$ continuous-time random walk over the $1\times n$ contingency tables with any fixed coordinate sum.
We will only keep track of the first $n-1$ coordinates so that the resulting process $\{X_t\}_{t \geq 0}$ on $\bbZ_q^{n-1}$ will be aperiodic and irreducible.
We will use $(\bbZ \cap [-q/2, q/2))^{n-1}$ as representatives of $\bbZ_q^{n-1}$.
Let $\{Y_s\}_{s \in \bbN}$ be the embedded discrete-time chain of $\{X_t\}$.
Since our processes are vertex-transitive, without loss of generality, we may assume that $X_0 = 0$ and $Y_0 = 0$.
Denote by $P$ the transition matrix of $\{Y_s\}$.
Then the rate matrix $Q$ for $\{X_t\}$ is $Q = P - I$ where $I$ is the identity matrix, and the transition kernel $H_t$ for $\{X_t\}$ is $H_t = \exp(tQ)$.
We write
\[
H_t(x,y) := \bbP_x(X_t = y) = \bbP(X_t = y | X_0 = x).
\]

Define
\[
S := \{a_{i j} \in \bbZ_q^{n-1}| 1 \leq i ,j \leq n, i\neq j\}
\]
where $a_{i j} \in \bbZ_q^{n-1}$ is defined as follows:
if $1\leq i,j \leq n-1$, then $a_{i j}$ has $1$ on the $i$-th coordinate, $-1$ on the $j$-th coordinate, and zero elsewhere;
if $i = n$, then $a_{ij}$ has $-1$ on the $j$-th coordinate and zero elsewhere;
if $j = n$, then $a_{ij}$ has $1$ on the $i$-th coordinate and zero elsewhere.
Then $\{Y_s\}$ is a random walk on the group $\bbZ_q^{n-1}$ with the symmetric set of generators $S$.

Let $Y$ be a lattice-valued random vector with the same law as that of $Y_1$ mapped to $(\bbZ \cap [-q/2, q/2))^{n-1}$.
Observe that $Y$ has covariance matrix $\dfrac{2}{n} \Gamma$, where 
\[
\Gamma_{ij} = \begin{cases} 1 & i = j \\ -\dfrac{1}{n-1} & \text{else} \end{cases}.
\]
Moreover, it is straightforward to check that 
\[
(\Gamma^{-1})_{ij} =\begin{cases} \dfrac{2(n-1)}{n} & i = j  \\
\\ \dfrac{n-1}{n} & \text{else} \end{cases}.
\]

Since $P$ is aperiodic, irreducible, and symmetric, both $\{X_t\}$ and $\{Y_m\}$ converge to the uniform distribution on $\bbZ_q^{n-1}$ by the convergence theorem of Markov chains.
We will give the mixing time of the continuous-time process $\{X_t\}$, and therefore the mixing time of the $1\times n$ contingency table random walk in continuous time.

\subsection{Eigenvalues for $\{X_t\}$ and $\{Y_s\}$}

By the same reasoning as in Section 3.2, the Fourier transforms $\{\hat{\rho}_y : y \in \bbZ_q^{n-1}\} = \{\Phi_{Y}(2\pi y/q) : y \in (\bbZ \cap [-q/2, q/2))^{n-1}\}$ are precisely the eigenvalues of the discrete-time process $\{Y_s\}$, where $\Phi_{Y}$ is again the characteristic function of $Y$.
And the eigenvalues $\{\lambda_j : 1\leq j \leq q^{n-1}\}$ of the continuous-time process $\{X_t\}$ are 
\[
\{\lambda_j : 1\leq j \leq q^{n-1}\} = \{\exp(\Phi_{Y}(2\pi y/q) - 1) : y \in (\bbZ \cap [-q/2, q/2))^{n-1}\}.
\]

By the continuous-time $\ell^2$-bound,
\begin{equation}
\label{ell2bound}
\begin{aligned}
    4 d(t)^2 &\leq \sum_{j=2}^{q^{n-1}} \lambda_j^{2t} \\
    &= \sum_{\substack{y \in (\bbZ \cap [-q/2, q/2))^{n-1} \\ y \neq 0}} \exp\left(2t\left(\Phi_{Y}\left(\frac{2\pi y}{q}\right) - 1 \right)\right) \\
    &= -1 + \sum_{y \in (\bbZ \cap [-q/2, q/2))^{n-1}}\exp\left(2t\left(\Phi_{Y}\left(\frac{2\pi y}{q}\right) - 1 \right)\right). 
\end{aligned}  
\end{equation}
  One can find a proof of the discrete-time analog of the $\ell^2$-bound in Lemma 12.18 of \cite{LevinPeres}.



\subsection{Strategy for Mixing Time Upper Bound}

We will form the mixing time upper bound via the $\ell^2$ bound in (\ref{ell2bound}). 
We use the link between the eigenvalues of $X_t$ and the characteristic function of the lifted distribution to gain control on the rate of decay of the spectrum of $X_t$. 
This is done by Taylor expanding $\Phi_Y(2 \pi y/q)$ in the regime where $\rVert y \rVert_{\infty} \leq \dfrac{\sqrt{q}}{2 \pi}$. 
In this regime, the eigenvalues of $X_t$ resemble the Gaussian characteristic function and depend on the quadratic form $y^T \Gamma y$, where $\Gamma$ is the correlation matrix of the lifted distribution $Y$. 
In Section 4.5.2., we simplify the quadratic form $y^T \Gamma y$ based on Schur complements of $\Gamma^{-1}$. 
We then use this in Section 4.5.3. to show the $\ell^2$-bound is sufficiently small in the high frequency regime ($\rVert y \rVert_{\infty} \leq \dfrac{\sqrt{ q}}{2 \pi}$). In Section 4.5.1 we also show all contributions to the $\ell^2$-bound in the lower frequency regime ($\rVert y \rVert_{\infty} > \dfrac{\sqrt{q}}{ 2 \pi}$) by directly analyzing the decay of $\Phi_Y$, the characteristic function of our lifted distribution, outside of a neighborhood of zero under the regime where $q \gg n$.

We will first show that the sum in (\ref{ell2bound}) over $y \in (\bbZ \cap [-q/2, q/2))^{n-1}$ with $\| y \|_\infty > \dfrac{\sqrt{q}}{2 \pi }$ is small enough. 
For this to be true, we need to assume that $q \gg n$. 
Specifically, we show that for $y$ with $\| y \|_{\infty} > \dfrac{\sqrt{q}}{2 \pi}$ that $\Phi_{Y}\left(\dfrac{2 \pi y}{q}\right) \leq 1 - \dfrac{1}{25 nq}$. 
For the proof, we set the time to be 
\begin{equation}
\label{mixingtime}
    t = \frac{n q^2\log(\alpha n) }{8 \pi^2}\left(1 - \frac{3}{q}\right)^{-1}
\end{equation}
where $\alpha \geq 1$ is independent of $n$.
Then $\exp(2t(\Phi_{Y}(2 \pi y/q)-1)) $ is of order $n^{-\mathcal{O}(q)}$ at this time $t$, and therefore the sum in (\ref{ell2bound}) over $y$ with $\|y\|_\infty > \dfrac{\sqrt{q}}{2\pi}$ vanishes as $n \to \infty$ with $q \gg n$.

For $y \in (\bbZ \cap [-q/2, q/2))^{n-1}$ with $\| y \|_{\infty} \leq \dfrac{\sqrt{q}}{2 \pi}$, we show that at time (\ref{mixingtime}),
\[
\exp\left(2t\left(\Phi_{Y}\left(\frac{2\pi y}{q}\right) - 1 \right)\right) \leq \exp(- y^T \Gamma y \log(\alpha n)).
\]
Finally, we will use this to conclude that at time $t$, the sum in (\ref{ell2bound}) over $y$ with $\| y \|_{\infty} \leq \dfrac{\sqrt{q}}{2 \pi}$ is bounded above by $\exp\left(\dfrac{4 + 4e^{2/e}}{\sqrt{\alpha}}\right)$ for all $n$ large enough.

\subsection{Bounding Decay in Characteristic Function for Large $\theta$}

We will first show that the characteristic function $\Phi_{Y}(\theta)$ has sufficient decay when $\| \theta \|_\infty > \dfrac{1}{\sqrt{q}}$, and then we will plug in $\theta = \dfrac{2\pi y}{q}$.
We can write the characteristic function as
\begin{equation}
\label{characteristicfunction}
    \Phi_{Y}(\theta) = \frac{ \sum_{j=1}^{n-1} \cos(\theta_j) +  \sum_{j=2}^{n-1}\sum_{k = 1}^{j-1} \cos(\theta_j - \theta_k)}{\binom{n}{2}}
\end{equation}
with $\theta = (\theta_1, \ldots, \theta_{n-1})^T$.

\subsubsection{Regime 1: $\dfrac{1}{\sqrt{q}} < \| \theta \|_\infty \leq \dfrac{\pi}{2}$}
Recall that $Y$ has covariance matrix $\dfrac{2}{n}\Gamma$.
By comparing the second-order terms in the Taylor expansion of $\Phi_{Y}(\theta)$, we have
\begin{equation}
\label{secondorderterm}
     \frac{2\theta^T \Gamma \theta}{n}
     =
     \frac{\sum_{j=1}^{n-1} \theta_j^2 +  \sum_{j=2}^{n-1}\sum_{k = 1}^{j-1} (\theta_j - \theta_k)^2}{\binom{n}{2}} .
\end{equation}
Since 
\begin{equation}
\label{cosinebound}
    \cos(x) \leq 1 - \left(\frac{1}{2} - \frac{\pi^2}{24}\right)x^2 \leq 1 - \frac{2}{25}x^2 \qquad \text{for} \ \  |x| \leq \pi,
\end{equation}
it follows from (\ref{characteristicfunction}) and (\ref{secondorderterm}) that 
\begin{equation}
\label{decay}
    \Phi_{Y}(\theta) \leq 1 - \dfrac{ 4\theta^T \Gamma \theta}{25n} \qquad \text{for} \ \ \rVert \theta \rVert_\infty \leq \dfrac{\pi}{2}.
\end{equation}
Next, note that $\Gamma$ has eigenvalue $1+\dfrac{1}{n-1}$ with eigenspace $\text{span}(\{\mathbbm{1}_{n-1}\})^\perp$ and multiplicity $n-2$, and eigenvalue $\dfrac{1}{n-1}$ with multiplicity $1$ and eigenvector $\mathbbm{1}_{n-1}$. 
Here, $\mathbbm{1}_{n-1}$ is the all-$1$ vector of dimension $n-1$.
Let $\overline{\theta} := \sum_{j=1}^{n-1}\theta_j / (n-1)$ be the average value of the coordinates of $\theta$.
Suppose $\dfrac{1}{\sqrt{q}} < \| \theta \|_\infty \leq \dfrac{\pi}{2}$. 
If $|\overline{\theta}| \leq \dfrac{1}{2\sqrt{q}}$, then
\begin{equation}
\label{decay1}
    \theta^T \Gamma \theta \geq \| \theta - \overline{\theta} \mathbbm{1}_{n-1}\|^2_2 \geq (\|\theta\|_\infty - |\overline{\theta}|)^2 \geq \frac{1}{4q}.
\end{equation}
Otherwise if $|\overline{\theta}| \geq \dfrac{1}{2\sqrt{q}}$, then 
\begin{equation}
\label{decay2}
    \theta^T \Gamma \theta \geq \frac{1}{n-1} \| \bar{\theta} \mathbbm{1}_{n-1} \|^2_2 \geq \frac{1}{4q}.
\end{equation}
Combining (\ref{decay}),(\ref{decay1}), and (\ref{decay2}), we obtain
\begin{equation}
\label{Decay1}
     \Phi_{Y}(\theta) \leq 1 - \frac{1}{25nq}   
\end{equation}
whenever $\dfrac{1}{\sqrt{q}} < \| \theta \|_\infty \leq \dfrac{\pi}{2}$. 

\subsubsection{Regime 2: $\dfrac{\pi}{2} \leq \| \theta \|_\infty \leq \pi$}
We will now show the decay of $\Phi_{Y}(\theta)$ for $\|\theta \|_\infty \geq \dfrac{\pi}{2}$.
Also, we may assume that $\theta \in [-\pi, \pi]^{n-1}$.
For $\theta = (\theta_1,\dots, \theta_{n-1})$, let 
\[
\cA = \cA(\theta) := \{j = 1,\dots, n-1 : |\theta_j| \geq 1\}
\]
with $|\cA|$ being the cardinality of the set $\cA$.

Suppose $|\cA| \leq \dfrac{n-1}{2}$.
Since $\|\theta\|_\infty \geq \pi/2$, there exists $j^* \in \{1,\dots, n-1\}$ such that $|\theta_{j^*}| \geq \pi/2$.
Then 
\begin{equation}
\label{decay3}
    \Phi_{Y}(\theta) = \frac{1}{{\binom{n}{2}}}\left(\sum_{j=1}^{n-1} \cos(\theta_j) + \sum_{j=2}^{n-1}\sum_{k=1}^{j-1} \cos(\theta_j - \theta_k)\right)
    \leq 1 - \frac{\sum_{j \in \cA^c} 1- \cos(\theta_{j^*} - \theta_j)}{{\binom{n}{2}}}.
\end{equation}
Here, we replaced all the cosines in the first sum by $1$, and in the second sum, we only considered the case where $j = j^*$ and $k \in \cA^c$, and replaced all the other cosines by $1$.
Since $|\theta_{j^*}| \geq \pi/2$ and $\theta \in [-\pi, \pi]^{n-1}$,
\[
\pi/2 - 1 \leq |\theta_{j^*} - \theta_j| \leq \pi + 1
\]
for every $j \in \cA^c$.
So (\ref{decay3}) is bounded above by
\begin{equation}
\label{Decay2}
   1 - \frac{\frac{n-1}{2}(1 - \cos(\pi/2 - 1))}{{\binom{n}{2}}} \\
    = 1 - \frac{1 - \cos(\pi/2 - 1)}{n}. 
\end{equation}
     
Now we suppose $|\cA| \geq \dfrac{n-1}{2}$.
Then
\begin{equation}
\label{decay4}
    \Phi_{Y}(\theta) 
    = \frac{1}{{\binom{n}{2}}}\left(\sum_{j=1}^{n-1} \cos(\theta_j) + \sum_{j=2}^{n-1}\sum_{k=1}^{j-1} \cos(\theta_j - \theta_k)\right)
    \leq 1 - \frac{\sum_{j \in \cA} (1 - \cos(\theta_j))}{{\binom{n}{2}}}.
\end{equation}
Here, we replaced all the cosines in the second sum by $1$, and in the first sum, we only considered the case where $j \in \cA$ and replaced all the other cosines by $1$.
Since $\theta \in [-\pi, \pi]^{n-1}$,
\[
-1 \leq \cos(\theta_j) \leq \cos(1)
\]
for every $j \in \cA$.
So (\ref{decay4}) is bounded above by
\begin{equation}
\label{Decay3}
      1 - \frac{\frac{n-1}{2}(1 - \cos(1))}{{\binom{n}{2}}}
     \leq 1 - \frac{1 - \cos(1)}{n}.   
\end{equation}

\subsubsection{Combining the Two Regimes of Large $\theta$}
Now putting together (\ref{Decay1}), (\ref{Decay2}), and (\ref{Decay3}), we get
\begin{equation}
\label{Decay4}
     \Phi_{Y}(\theta) \leq 1 - \frac{1}{25nq}   
\end{equation}
whenever $\theta \in [-\pi, \pi]^{n-1}$ and $\|\theta\|_\infty > \dfrac{1}{\sqrt{q}}$.
Thus for $\|y\|_\infty > \dfrac{\sqrt{q}}{2\pi}$,
\begin{equation}
\label{Decay}
\exp\left(2t\left(\Phi_{Y}\left(\frac{2\pi y}{q}\right) - 1 \right)\right) \leq \exp\left(-\frac{2t}{25nq}\right).
\end{equation}
Plugging in the mixing time in (\ref{mixingtime}) gives
\begin{equation}    \exp\left(2t\left(\Phi_{Y}\left(\frac{2\pi y}{q}\right) - 1 \right)\right) \leq n^{-\dfrac{q}{100\pi^2}}.
\end{equation}
for $\|y\|_\infty > \dfrac{\sqrt{q}}{2\pi}$.
Note that the number of $y \in (\bbZ \cap [-q/2, q/2))^{(n-1)}$ that satisfies $\|y\|_\infty > \dfrac{\sqrt{q}}{2\pi}$ is 
\begin{equation}
\label{elementcount}
    q^{n-1} - \left(\lfloor \frac{\sqrt{q}}{\pi} \rfloor + 1\right)^{n-1} = \cO(q^{n-1}).
\end{equation}
Thus if $q \gg n$, then at the mixing time in (\ref{mixingtime}), we conclude that
\begin{equation}
\label{upperbound1}
    \sum_{\|y\|_\infty > \frac{\sqrt{q}}{2\pi}}\exp\left(2t\left(\Phi_{Y}\left(\frac{2\pi y}{q}\right) - 1 \right)\right) \to 0
\end{equation}
as $n \to \infty$.

\subsection{Bounding Decay in Characteristic Function for Small $\theta$}

By the previous section, we know that the contribution to the sum in (\ref{ell2bound}) over $y$ with $\left\| \dfrac{2 \pi y}{q} \right\|_\infty > \dfrac{1}{\sqrt{q}}$ will be small. 
We will therefore focus on the regime $\left\| \dfrac{2 \pi y}{q} \right\|_\infty \leq \dfrac{1}{\sqrt{q}}$ of $y$.

\subsubsection{Decomposing the Characteristic Function in $\ell^2$-Bound}

By the Taylor expansion of $\Phi_{Y}$, we can write
\begin{equation}
\label{charbound1}
    \Phi_{Y}\left(\frac{2\pi y}{q} \right)
    \leq
    1 - \frac{1}{2}\left(\frac{2\pi y}{q} \right)^T \left(\frac{2}{n}\Gamma \right)\left(\frac{2\pi y}{q} \right)
    +\frac{1}{24}\bbE\left[\left(\frac{2\pi y}{q}\cdot Y\right)^4\right].
\end{equation}
Since $\left\| \dfrac{2 \pi y}{q} \right\|_{\infty} \leq \dfrac{1}{\sqrt{q}}$, i.e. $\|y\|_\infty \leq \dfrac{\sqrt{q}}{2\pi}$, it follows that
\begin{equation}
\label{charbound2}
    \left|\frac{2\pi y}{q}\cdot Y\right| 
    \leq 
    \frac{2\pi \|y\|_{\infty}}{q}\sum_{j=1}^{n-1}|Y(j)|
    \leq \frac{2}{\sqrt{q}}
\end{equation}
as all the coordinates of $Y = (Y(1),\dots, Y(n-1))^T$ are zero except at most two coordinates which take values $\pm 1$.
Combining (\ref{charbound1}) and (\ref{charbound2}), we obtain
\begin{equation}
\begin{aligned}
    \Phi_{Y}\left(\frac{2\pi y}{q} \right)
    &\leq
    1 - \frac{4\pi^2}{nq^2}y^T \Gamma y + \frac{1}{6q}\bbE\left[\left(\frac{2\pi y}{q} \cdot Y \right)^2\right] \\
    &\leq 
    1 - \frac{4\pi^2}{nq^2}y^T \Gamma y \left(1 - \frac{1}{3q} \right).
\end{aligned}
\end{equation}
Therefore, at the mixing time (\ref{mixingtime}),
\begin{align*}
    \exp \left(2t \left(\Phi_{Y}\left(\frac{2 \pi y}{q}\right) - 1 \right)\right)
    \leq \exp(-\log(\alpha n) y^T \Gamma y).
\end{align*}

\subsubsection{Simplifying the Quadratic Form $y^T \Gamma y$}

In this section, we will simplify the quadratic form $y^T \Gamma y$ in the upper mixing time bound. 
For notations, we write $I_k$ for the identity matrix and $J_k$ for the all-$1$ matrix, both of dimension $k \times k$. 
And recall that $\mathbbm{1}_{k}$ is the all-$1$ column vector of dimension $k$. 
We will use the fact that for any $(n-1) \times (n-1)$ symmetric matrix $B = \begin{bmatrix} B_{11} & B_{12} \\ B_{21} & B_{22} \end{bmatrix}$ written in block form, if $B_{11}$ is invertible, then for $x = (x_1, x_2)^T$, 
\begin{equation}
\label{quadratic1}
    x^T B^{-1}x = x_{1}^TB_{11}^{-1}x_{1} + (x_2 - B_{21}B_{11}^{-1}x_1)^T(B_{22} - B_{21}B_{11}^{-1}B_{12})^{-1}(x_2 - B_{21}B_{11}^{-1}x_1).
\end{equation}
One may verify (\ref{quadratic1}) using the decomposition 
\[
B = \begin{bmatrix} I_k & 0 \\ B_{21}B_{11}^{-1} & I_{n-k-1} \end{bmatrix}
\begin{bmatrix} B_{11} & 0 \\ 0 & B_{22} - B_{21}B_{11}^{-1}B_{12} \end{bmatrix}
\begin{bmatrix} I_k & B_{11}^{-1}B_{12} \\ 0 & I_{n-k-1} \end{bmatrix}.
\]
Meanwhile, it is straightforward to check that $\Gamma^{-1} = \dfrac{n-1}{n}(I_{n-1}+J_{n-1})$.
Fix $k = 2,\dots, n-1$.
Let $A(k) \in \mathbb{R}^{k^2}$ be any principal submatrix of $\Gamma^{-1}$ and write
\[
A(k) = \frac{n-1}{n}(I_k + J_k) = \begin{bmatrix} A_{11}(k) & A_{12}(k) \\ A_{21}(k) & A_{22}(k) \end{bmatrix}
\]
with $A_{22}(k) \in \mathbb{R}$. 
Since $\Gamma$ is symmetric, so is $\Gamma^{-1}$, and thus $A_{21}(k) = A_{12}(k)^T$.
Note that 
\begin{equation}
\label{quadratic2}
    A_{11}(k) = \frac{n-1}{n}(I_{k-1} + J_{k-1}), \quad 
A_{12}(k) = \frac{n-1}{n} \mathbbm{1}_{k-1},
\quad
A_{22}(k) = \frac{2(n-1)}{n}.
\end{equation}
We then have 
\begin{equation}
\label{quadratic3}
A_{11}(k)^{-1} = \frac{n}{n-1}\left(I_{k-1} - \frac{1}{k}J_{k-1}\right), \qquad
A_{21}(k) A_{11}(k)^{-1} = \frac{1}{k}\mathbbm{1}_{k-1}^T,
\end{equation}
and
\begin{equation}
\label{quadratic4}
    A_{21}(k) A_{11}(k)^{-1} A_{12}(k) = \frac{(n-1)(k-1)}{nk}.
\end{equation}
Thus by (\ref{quadratic2}) and (\ref{quadratic4}),
\begin{equation}
\label{quadratic5}
A_{22}(k) - A_{21}(k)A_{11}(k)^{-1}A_{12}(k) = \frac{(n-1)(k+1)}{nk}. 
\end{equation}
Now for any $y = (y_1,\dots, y_{n-1})^T$, we can inductively apply (\ref{quadratic1}), (\ref{quadratic3}), and (\ref{quadratic5}) to obtain
\begin{equation}
\label{quadratic}
    y^T \Gamma y = \dfrac{n}{n-1}\left(\dfrac{1}{2}y_1^2 + \sum_{k=2}^{n-1} (y_k - \mu_{k}(y))^2\dfrac{k}{k+1}\right)
\end{equation}
with 
\[
\mu_k(y) := A_{21}(k) A_{11}^{-1}(k)(y_1,\ldots,y_{k-1})^T = \frac{1}{k}\sum_{j=1}^{k-1}y_j
\]
for each $k = 2,3,\dots, n-1$.

\begin{remark}
    As we will show in the next section, the actual value of $\mu_k(y)$ is not significant.
    We will take all the $\mu_k(y)$'s to be zero to obtain an upper bound which is enough for our conclusion.
\end{remark}

\subsubsection{Bounding the Sum for Small $y$}

We now show that the sum in (\ref{ell2bound}) over $y \in \bbZ_q^{n-1}$ with $\left\| \dfrac{2 \pi y}{q} \right\|_\infty \leq \dfrac{1}{\sqrt{q}}$ is small at the mixing time. 
Let 
\[
\cM := \left\{ y \in (\bbZ \cap [-q/2, q/2))^{n-1} : \left\| \dfrac{2 \pi y}{q} \right\|_\infty \leq \dfrac{1}{\sqrt{q}} \right\}
\]
and $N := \lfloor \dfrac{\sqrt{q}}{2 \pi} \rfloor$. 
For $y = (y_1,\dots, y_{n-1})^T$, we can write
\begin{equation*}
\begin{aligned}
    &\sum_{y \in \cM} \exp(-\log(\alpha n)y^T\Gamma y) \\
    &= \sum_{y \in \cM} \exp\left(-\log(\alpha n) \frac{n}{n-1}\left(\frac{1}{2}y_1^2 + \sum_{k=2}^{n-1} (y_k - \mu_k(y))^2\frac{k}{k+1}\right)\right) \\
    &\leq \sum_{y \in \cM} \exp\left(-\log(\alpha n) \left(\frac{1}{2}y_1^2 + \sum_{k=2}^{n-1} (y_k - \mu_k(y))^2\frac{k}{k+1}\right)\right).
\end{aligned}
\end{equation*}
By regrouping the summation with the definition of $\mathcal{M}$, the expression above can be written as
\begin{equation}
\label{small1}
\begin{aligned}
    & \sum_{j_1=-N}^N \exp\left(- \frac{1}{2}\log(\alpha n) j_1^2\right)\left(\sum_{j_2 = -N}^N \exp\left(- \frac{2}{3}\log(\alpha n) (j_2 - \mu_2(y))^2\right)\right)\cdots \\
    & \quad \ldots \left(\sum_{j_{n-1} = -N}^N \exp(- \log(\alpha n) (j_{n-1} - \mu_{n-1}(y))^2)\right).
\end{aligned}
\end{equation}
Next, note that for all $2 \leq k \leq n-1$, $\sum_{j=-N}^N \exp\left(-\dfrac{k+1}{k}\log(\alpha n)(j _k - x)^2\right)$ is maximized at $x = 0$ as long as $n$ is large enough by Lemma \ref{zeroconditionalmean}. 
Thus we obtain
\begin{equation}
\label{small2}
\begin{aligned}
    &\sum_{y \in \cM} \exp(-\log(\alpha n)y^T\Gamma y) \\
    &\leq \left(\sum_{j=-N}^N \exp\left(- \frac{1}{2}\log(\alpha n) j^2 \right)\right) \prod_{k=2}^{n-1}\left(\sum_{j=-N}^N \exp\left(-\frac{k}{k+1}\log(\alpha n)j^2 \right)\right) \\
    &= \exp\left(\sum_{k=1}^{n-1} \log\left(\sum_{j=-N}^N \exp\left(-\frac{k}{k+1}\log(\alpha n)j^2\right)\right)\right). 
\end{aligned}
\end{equation}
We look at the inner expression:
\begin{equation*}
\sum_{j=-N}^N \exp\left(-\frac{k}{k+1}\log(\alpha n)j^2\right) 
= 1 + 2\sum_{j=1}^N \exp\left(-\frac{k}{k+1}\log(\alpha n)j^2\right)
\end{equation*}
Bounding the sum by an integral, we have that the previous expression is at most
\begin{equation}
\label{small3}
\begin{aligned}
& 1 + 2\exp\left(-\frac{k}{k+1}\log(\alpha n)\right) + 2\int_1^N \exp\left(-\frac{k}{k+1}\log(\alpha n)z^2\right) dz \\
&\leq 1 + 2\exp\left(-\frac{k}{k+1}\log(\alpha n)\right) + 2\int_1^\infty z \cdot \exp\left(-\frac{k}{k+1}\log(\alpha n)z^2\right) dz \\
&= 1 + 2\exp\left(-\frac{k}{k+1}\log(\alpha n)\right) + \frac{k+1}{k \log(\alpha n)}\exp\left(-\frac{k}{k+1}\log(\alpha n)\right) \\
&\leq 1 + 4\exp\left(-\frac{k}{k+1}\log(\alpha n)\right).
\end{aligned}
\end{equation}
Using this bound, the fact that $\log(1+x) \leq x$ for all $x > -1$, and Lemma \ref{summationbound}, we have
\begin{equation}
\label{upperbound2}
    \begin{aligned}
    \sum_{y \in \cM} \exp(-\log(\alpha n)y^T\Gamma y)
    &\leq \exp\left(\sum_{k=1}^{n-1} \log\left(1 + 4\exp\left(-\frac{k}{k+1}\log(\alpha n)\right)\right)\right) \\
    &\leq \exp\left(4\sum_{k=1}^{n-1} \exp\left(-\frac{k}{k+1}\log(\alpha n)\right) \right) \\
    &\leq \exp\left(\frac{4 + 4e^{2/e}}{\sqrt{\alpha}}\right).
\end{aligned}
\end{equation}

\subsection{Mixing Time Upper Bound}

We are now ready to state the mixing time upper bound.
For any $\varepsilon \in (0,1)$, set 
\[
\alpha := \inf\left\{z \geq 1 : 4\varepsilon^2 \geq \exp\left(\frac{4 + 4e^{2/e}}{\sqrt{z}}\right) - 1\right\}.
\]
Putting together (\ref{ell2bound}), (\ref{upperbound1}), and (\ref{upperbound2}), we conclude when $t = \dfrac{nq^2 \log(\alpha n) }{8 \pi^2}\left(1 - \dfrac{1}{3q}\right)^{-1}$,
\[
d(t) \leq \varepsilon + o(1)
\]
as $n \to \infty$.

\subsection{Mixing Time Lower Bound}

We will now compute the mixing time lower bound for the $1 \times n$ contingency table walk in continuous time. We showed that the discrete-time increment $Y$ has equivariant coordinates, with each coordinate having a marginal variance of $\dfrac{2}{n}$. 
Recall that the off-diagonal entries of the correlation matrix are equal to exactly $\dfrac{-1}{n-1}$. 
Since at each jump the $1 \times n$ contingency table random walk updates at most two coordinates, each by $\pm 1$, we have that $\|Y\|_1 \leq 2$. And therefore, applying (\ref{generallowerbound}) and setting $r = 2$, $\sigma^2 = \dfrac{2(n-1)}{n}$, we obtain
\[
    t_{\operatorname{mix}}(\varepsilon) \geq \frac{nq^2}{8\pi^2}\left(\log(n) + \log\left(1 - \frac{1}{n}\right) + \log\left(\frac{\varepsilon^{-1}-1}{5}\right)\right)
\]
for all $n$ large enough, assuming $q \gg \sqrt{\log(n)}$.

Now combining Sections 4.6 and 4.7 completes the proof of Theorem \ref{main1}.

\section{Generalization for Mixing Time Upper Bound}

We now generalize our methods for mixing time upper bounds to a family of random walks on tori.
The setup is similar to the one in Section 3 for the general mixing time lower bound.

\subsection{Setup}

Let $\{Y_s\}_{s \in \bbN}$ be an arbitrary irreducible symmetric discrete-time Markov chain on the torus $\bbZ_q^n$ with transition matrix $P$, such that 
\[
\bbP(Y_s = y_s | Y_{s-1} = y_{s-1}) = \mu(y_s - y_{s-1}).
\]
We set $Y_0 = 0$.
Let $Y \in \bbZ^n$ be a lattice-valued random vector with law $\mu$ mapped to $(\bbZ \cap [-q/2, q/2))^n$.
Suppose that $\|Y\|_1 \leq r$, $\|Y\|_\infty < q/2$, and $r^2 \ll q$. 
Note that $Y$ is symmetric.
Assume $Y$ has the same marginal variance $\dfrac{\sigma^2}{n}$ for each coordinate.
Write $\operatorname{Var}(Y) = \dfrac{\sigma^2}{n} \Gamma$ where $\Gamma$ is the correlation matrix of $Y$.
We will assume that $\Gamma$ is positive definite.
Let $\{X_t\}$ be the rate-1 continuous-time Markov chain on $\bbZ_q^n$ with rate matrix $Q = P - I$ and transition kernel $H_t = \exp(tQ)$.

By the same reasoning as in Section 3.2, the eigenvalues of $\{X_t\}$ are 
\begin{equation}
\label{eigenvalues}
    \left\{\exp\left(\Phi_{Y}\left(\frac{2\pi y}{q}\right) - 1\right) : y \in (\bbZ \cap [-q/2, q/2))^n \right\}.
\end{equation}
Recall from (\ref{ell2bound}) that the $\ell^2$-bound says
\[
4d(t)^2 \leq -1 + \sum_{y \in (\bbZ \cap [-q/2, q/2))^{n}}\exp\left(2t\left(\Phi_{Y}\left(\frac{2\pi y}{q}\right) - 1 \right)\right).
\]
We will again upper bound the sum above in two regimes of $y$ separately.

Our strategy in generalizing our upper bound method will be similar to that in Section 4 by analyzing the $\ell^2$ bound. 
We leverage the correspondence between eigenvalues of our walk $X_t$ and the characteristic function of its lift, $\Phi_Y$. Similar to a local central limit theorem, we show our characteristic function $\Phi_Y$ is sufficiently decayed in the low frequency regime $\rVert y \rVert_{\infty} \geq \dfrac{q}{2 \pi}$, and thus our $\ell^2$ sum is controlled. 
We define a Decay Condition that describes the amount of control needed, and briefly discuss tools to show the condition holds. 
In the high frequency regime $\rVert y \rVert_{\infty} \leq \dfrac{\sqrt{q}}{2 \pi}$, we Taylor expand $\Phi_Y$ and leverage the correlation structure of the lifted distribution $Y$. 
We form a Correlation Condition based on $\Gamma$ to gain control on the rate of decay of the spectrum of $X_t$.

\subsection{Regime 1: $\|y\|_\infty > \dfrac{\sqrt{q}}{2\pi}$}

When $y$ is big, we need $Y$ to satisfy the following decay condition, so that we can show that our $\ell_2$ sum is very small in the low frequency regime in a format analogous to a local central limit theorem. This condition is based on the rate of decay of the characteristic function $\Phi_Y$ outside of a neighborhood of zero.
For this condition to hold, we may need some additional asymptotic assumptions on $q$.
\begin{definition}
    Let 
    \[
    \cL := \left\{ y \in (\bbZ \cap [-q/2, q/2))^{n} : \left\| \dfrac{2 \pi y}{q} \right\|_\infty > \dfrac{1}{\sqrt{q}}\right\}.
    \]
    and let $Y$ be a symmetric random variable supported on $(\bbZ \cap [-q/2, q/2))^n$ with marginal variance $\dfrac{\sigma^2}{n}$ by coordinate.
    We say that $Y$ satisfies the \textit{Decay Condition} if for
    \[
    t \geq \dfrac{nq^2 \log(n)}{4 \pi^2 \sigma^2},
    \]
    the sum 
    \[
    \sum_{y \in \cL}\exp\left(2t\left(\Phi_{Y}\left(\frac{2\pi y}{q}\right) - 1 \right)\right) \to 0
    \]
    as $n \to \infty$, where $\Phi_Y$ is the characteristic function of $Y$.
\end{definition}
Note that we required $q \gg n$ in the $1 \times n$ contingency table case for this condition to hold.
We give a few other examples in which the Decay Condition is met.
\begin{example}[Simple Random Walk on $\bbZ_q^n$]
    If $\{Y_s\}_{s\in \bbN}$ is the simple random walk on $\bbZ_q^n$, then $Y$ is the increment for the simple random walk on $\bbZ^n$.
    In other words, $Y$ takes values in 
    \[
    \{(0,\dots, 0, \pm 1, 0\dots, 0) \in \bbZ^n\},
    \]
    each with probability $\dfrac{1}{2n}$.
    In this case, $r = \sigma^2 = 1$.
    The characteristic function $\Phi_Y(\theta) = \dfrac{1}{n}\sum_{k=1}^n \cos(\theta_k)$ for $\theta = (\theta_1,\dots, \theta_n) \in \bbR^n$.
    Using the bound (\ref{cosinebound}) for cosine, we have that for $y = (y_1,\dots, y_n) \in \cL$ and $t \geq \dfrac{nq^2 \log(n)}{4 \pi^2}$,
    \begin{equation}
    \label{SRWexample}
        \begin{aligned}
        \exp\left(2t\left(\Phi_{Y}\left(\frac{2\pi y}{q}\right) - 1 \right)\right) 
        &\leq 
        \exp\left(2t\left(-1 + \frac{1}{n}\sum_{k=1}^n \cos\left(\frac{2\pi y_k}{q}\right)\right)\right) \\
        &\leq 
        \exp\left(2t\left(-1 + \frac{1}{n}\sum_{k=1}^n 1 - \frac{2}{25}\frac{4\pi^2 y_k^2}{q^2}\right)\right) \\
        &=
        \exp\left(-2t\left(\sum_{k=1}^n \frac{2}{25n}\frac{4\pi^2 y_k^2}{q^2}\right)\right)
    \end{aligned}
    \end{equation}
For $y \in \mathcal{L}$, $\|y\|_\infty  > \dfrac{\sqrt{q}}{2\pi}$.
Since $\sum_{k=1}^n y_k^2 \geq \|y\|_\infty^2$, the expression above can be upper bounded by
    \begin{equation*}
        \exp\left(2t\left(-\frac{2}{25n}\frac{4\pi^2 \|y\|_\infty^2}{q^2}\right)\right) 
        \leq
        \exp\left(2t\left(-\frac{2}{25nq}\right)\right) 
        \leq
        n^{-\dfrac{q}{25\pi^2}}.
    \end{equation*}
    Similar to (\ref{elementcount}), the cardinality of $\cL$ is $\cO(q^n)$.
    Thus the Decay Condition is met as long as $q \gg n$.
\end{example}

\begin{example}[Dirichlet Form Comparison]
Methods of Dirichlet form comparison can be used to find regimes with the Decay Condition. 
\begin{definition}
    Let $f$ be a real-valued function on a state space $\Omega$, and let $P$ be a reversible and irreducible transition matrix on $\Omega$ with stationary distribution $\pi$. 
    We define the \textit{Dirichlet form},
    \begin{align*}
        \mathcal{E}(f) := \langle (I-P)f, f \rangle_{\pi}
    \end{align*}
    where the inner product is defined as
    \begin{align*}
        \langle f, g \rangle_{\pi} := \sum_{x \in \Omega} \pi(x) f(x) g(x)
    \end{align*}
\end{definition}
For any non-constant eigenfunction $f$ of $P$ with  $\operatorname{Var}_\pi(f) = 1$, we have that
\begin{equation}
    \label{dirichleteigen}
    \mathcal{E}(f) = 1 - \lambda
\end{equation}
with $\lambda$ being the corresponding eigenvalue of $f$. 

Using this, let $P$ be the transition matrix for $\{Y_s \}$ and $\widetilde{P}$ be the transition matrix for the simple random walk on $\bbZ_q^n$. Suppose we have a constant $B = B(n)$ so that for any real-valued function $g$ on $\Omega$ we have
\begin{equation}
    \label{pathmethod}
    \begin{aligned}
        \widetilde{\mathcal{E}}(g) \leq B \mathcal{E}(g)
    \end{aligned}
\end{equation}
If we set $g(x) = \cos\left(\dfrac{2 \pi \langle y, x \rangle}{q}\right)$, for $\| y \|_\infty \geq \dfrac{\sqrt{q}}{2 \pi}$ we obtain by (\ref{SRWexample}), (\ref{dirichleteigen}), and (\ref{pathmethod}),
\begin{equation}
    \label{pathbound}
    \begin{aligned}
        \exp\left(2t\left( \Phi_Y\left(\frac{2 \pi y}{q}  \right) - 1 \right)\right) = \exp\left( -2t \mathcal{E}(g)  \right) \leq \exp \left( - \dfrac{2t \tilde{\mathcal{E}}(g)}{B} \right) \leq n^{-\dfrac{q }{25 \pi^2 B \sigma^2 }}
    \end{aligned}
\end{equation}
Therefore, since the cardinality of $\mathcal{L}$ is $\mathcal{O}(q^n)$, the Decay Condition is met if $\dfrac{q}{\log q} \gg \dfrac{Bn \sigma^2}{\log n}$.
\begin{remark}
    Several studied methods use Dirichlet form comparison to form such a constant $B$. 
    For example, \cite{DiaconisSaloff-Coste} establishes the method of canonical paths and forms $B$ by congestion ratio between the dynamics of $P$ and $\widetilde{P}$. 
    Chapter 13 of \cite{LevinPeres} gives more details on the subject. Additionally, \cite{Benedicks} derives an upper bound to the decay of the characteristic functions for integer-valued random walks by also considering paths that lead to $1 \in \bbZ$.
\end{remark}
\end{example}

\begin{example}[Irreducible Lifted Walk on $\bbZ^n$]
Let $\{Y_s\}_{s \in \bbN}$ again be an irreducible symmetric random walk on $\bbZ_q^n$ as in the setup above in Section 5.1, and let $Y$ be defined accordingly.
Let $\{S_s\}_{s \in \bbN}$ be a random walk on $\bbZ^n$ with increment $Y$.
First, we observe that (\ref{eigenvalues}) with $t=1$ corresponds to the characteristic function for a random variable on $\mathbb{Z}^n$ with probability mass function $p$ given by
\[
    p(x) = e^{-1} \sum_{s=0}^\infty  \dfrac{\mathbb{P}(S_s = x)}{s!}. 
\]
By the irreducibility of $S_s$, we know that the random walk on $\mathbb{Z}^n$ with incremental distribution $p$ is aperiodic and irreducible. 
Therefore, by Lemma 2.3.2 of \cite{LawlerLimic}, for all $\theta \in [-\pi, \pi]^n$, 
\[
\exp(\Phi_{Y}(\theta) - 1)\leq \exp(-b\|\theta\|_2^2)
\]
for some $b > 0$.
Here, $b$ depends on $n$ and the distribution of $Y$.
When $\dfrac{q}{\log q} \gg \dfrac{\sigma^2}{b\log(n)}$ as $n \to \infty$, the Decay Condition is met for $Y$.
\end{example}

\subsection{Regime 2: $\|y\|_\infty \leq \dfrac{\sqrt{q}}{2\pi}$}

On the other hand, when $y$ is small, we will assume the following correlation condition on the correlation matrix $\Gamma$ of $Y$. Our correlation condition depends on the magnitude of Schur complements of $\Psi$, the inverse of the random walk's correlation matrix $\Gamma$.
Schur complements of $\Psi$ have a direct connection to the conditional variances of the multivariate Gaussian distribution with correlation matrix $\Gamma$. 
This connection to the multivariate Gaussian distribution arises from studying the lifted random walk of $X_t$, which then is linked to the local limit theorem in our regime when $q$ is large.
Our condition checks the magnitude of conditional variances for the multivariate Gaussian distribution with correlation matrix $\Gamma$. We will now state the condition and afterwards provide more interpretation.

\begin{definition}
Let $\Gamma_n$ be a sequence of correlation matrices such that $\Gamma_n \in \mathbb{R}^{n \times n}$. 
We say $(\Gamma_n)_{n=1}^\infty$ meets the \textit{Correlation Condition} if both of the following conditions hold. 

\begin{enumerate}[(i)]
    \item Let $\Psi_n := \Gamma_n^{-1}$.
    Then $\sup_n \rVert \Psi_n \rVert_\infty < \infty$. 

    \item Let $\Psi_{n,k}$ be the principal submatrix obtained by taking the first $k$ rows and columns of $\Psi_n$. 
    Further partition
    \begin{align*}
        \Psi_{n,k} = \begin{bmatrix} \Psi_{n,k}^{(11)} & \Psi_{n,k}^{(12)} \\
        \Psi_{n,k}^{(21)} & \Psi_{n,k}^{(22)} \end{bmatrix}
    \end{align*}
    where $\Psi_{n,k}^{(11)}$ is the submatrix attained by taking the first $k-1$ rows and columns of $\Psi_{n,k}$, and $\Psi_
    {n,k}^{(22)}$ is the $(k,k)$ entry of $\Psi_{n,k}$. 
    Let $a_1^{(n)} := \Psi_{n,1}^{-1}$, and for $k \geq 2$, let
    \begin{align*}
        a_k^{(n)} := \left( \Psi_{n,k}^{(22)} -  \Psi_{n,k}^{(21)} \left(\Psi_{n,k}^{(11)}\right)^{-1} \Psi_{n,k}^{(12)} \right)^{-1}.
    \end{align*}
    Then for any $\alpha \geq 1$,
    \begin{align*}
        \sup_n \sum_{k=1}^n \left(\frac{1}{\alpha n }\right)^{a_k^{(n)}} \leq g(\alpha)
    \end{align*}
    where $g$ is a monotone decreasing function such that $\lim_{\alpha \rightarrow \infty} g(\alpha) = 0$, and $g$ does not depend on $n$.
\end{enumerate}
\end{definition}

As previously mentioned, the Correlation Condition depends on conditional variances of the multivariate Gaussian distribution. First, note that for $1 \leq i \leq n$, the $i$th diagonal entry of $\Psi$ can be written as $ \operatorname{Var}(Z_i \mid Z_1, \ldots, Z_{i-1}, Z_{i+1}, \ldots, Z_n)^{-1}$, where $Z$ has the multivariate Gaussian distribution with correlation matrix $\Gamma$, assuming unit variances. Thus, the condition first checks that the variance of one coordinate conditioned on the rest stays bounded above from zero as the dimension $n$ grows to infinity. 
Moreover, the elements $a^{(n)}_k$ are equal to the conditional variances $\operatorname{Var}(Z_k \mid Z_{k+1}, \dots Z_{n})$, where $Z$ again has the multivariate Gaussian distribution with correlation matrix $\Gamma$ and unit variances. 
The condition therefore checks that conditional variance of the multivariate Gaussian distribution with correlation matrix $\Gamma$ also decays from $1$ sufficiently slow as we condition on more and more variables.

\begin{remark}
  This Correlation Condition is always met for uncorrelated random walks, specifically with $g(\alpha) = 1/\alpha$.  
\end{remark}

Now for $\| y \|_\infty \leq \dfrac{\sqrt{q}}{2 \pi}$, we can bound the characteristic function as
\begin{align*}
    \Phi_{Y}(\theta) \leq 1 - \frac{1}{2} \left( \frac{2 \pi y}{q} \right)^T \left( \frac{ \sigma^2}{n} \Gamma \right) \left( \frac{2 \pi y}{q} \right) + \frac{1}{24} \mathbb{E}\left( \left( \frac{2 \pi y }{q} \cdot Y \right)^4 \right).
\end{align*}
As before, we can write
\begin{align*}
    \left| \frac{2 \pi y}{q} \cdot Y \right| & \leq \frac{2 \pi \| y \|_{\infty}}{q} \sum_{j=1}^n |Y(j)| \leq \dfrac{r}{\sqrt{q}}   
\end{align*}
where $Y(j)$ is the $j$-th coordinate of $Y$.
Thus
\begin{align*}
    \Phi_{Y}\left(\frac{2 \pi y}{q}\right) &\leq 1 - \frac{1}{2} \left( \frac{2 \pi y}{q} \right)^T \left( \frac{ \sigma^2}{n} \Gamma \right) \left( \frac{2 \pi y}{q} \right) + \frac{1}{24} \mathbb{E}\left( \left( \frac{2 \pi y }{q} \cdot Y \right)^4 \right) \\
    & \leq 1 - \frac{1}{2} \left( \frac{2 \pi y}{q} \right)^T \left( \frac{ \sigma^2}{n} \Gamma \right) \left( \frac{2 \pi y}{q} \right) \left(1 - \frac{r^2}{12q}\right).
\end{align*}
Now, at the mixing time
\begin{equation}
\label{generalmixingtime}
    t = \dfrac{nq^2 \log(\alpha n)}{4 \pi^2 \sigma^2}\left(1 - \dfrac{r^2}{12q}\right)^{-1}
\end{equation}
where $\alpha \geq 1$ is independent of $n$, we have
\begin{equation}
\label{eigenvaluestoquadraticform}
\begin{aligned}
    & \exp\left(2t\left(\Phi_{Y}\left(\frac{2 \pi y}{q}\right) - 1\right)\right)\\
    & \leq \exp\left(\dfrac{nq^2 \log(\alpha n)}{2 \pi^2 \sigma^2}\left(1 - \frac{r^2}{12q}\right)^{-1} \frac{1}{2} \left( \frac{2 \pi y}{q} \right)^T \left( \frac{ \sigma^2}{n} \Gamma \right) \left( \frac{2 \pi y}{q} \right)\left(1 - \frac{r^2}{12q}\right)\right) \\
    &= \exp(-\log(\alpha n) y^T \Gamma y).
\end{aligned}
\end{equation}

To complete the case of $\|y\|_\infty \leq \dfrac{\sqrt{q}}{2\pi}$, we use the following lemma, a generalization of Section 4.5.3.
\begin{lemma}
\label{upperboundsmally}
    Let 
\[
\cM := \left\{ y \in (\bbZ \cap [-q/2, q/2))^{n} : \left\| \dfrac{2 \pi y}{q} \right\|_\infty \leq \dfrac{1}{\sqrt{q}}\right\}.
\]
    If the correlation matrix $\Gamma$ of $\xi_1$ satisfies the Correlation Condition with the bounding function $g(\alpha)$, then
    \[
    \sum_{y \in \cM} \exp(-\log(\alpha n) y^T \Gamma y)
    \leq \exp\left(\left(2 + \sup_n \|\Psi_n\|_\infty \right)g(\alpha)\right)
    \]
    for all $n \in \bbN$ large enough.
\end{lemma}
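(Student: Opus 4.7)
The plan is to mirror the argument of Section~4.5.3, replacing the explicit $1 \times n$ Schur-complement computation by the abstract one furnished by the Correlation Condition. First I would apply the block-inverse identity \eqref{quadratic1} inductively to $\Psi_n = \Gamma^{-1}$, peeling off the last coordinate at each step. Starting from $y^T\Gamma y = y^T\Psi_n^{-1}y$ and recursing on the $(y_1,\ldots,y_{k-1})$-block (which is handled by $\Psi_{n,k-1} = \Psi_{n,k}^{(11)}$), this produces the decomposition
\[
y^T \Gamma y \;=\; \sum_{k=1}^n a_k^{(n)}\bigl(y_k - \mu_k(y)\bigr)^2,
\]
where $\mu_1(y) = 0$, $\mu_k(y) = \Psi_{n,k}^{(21)}\bigl(\Psi_{n,k}^{(11)}\bigr)^{-1}(y_1,\ldots,y_{k-1})^T$ for $k \geq 2$ depends only on the coordinates preceding $y_k$, and $a_k^{(n)}$ is the coefficient defined in the Correlation Condition. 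This is precisely the generalization of \eqref{quadratic}.

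Next I would write $\sum_{y\in\cM}\exp(-\log(\alpha n)\,y^T\Gamma y)$ as an iterated sum over $j_1,\ldots,j_n \in \{-N,\ldots,N\}$ with $N = \lfloor\sqrt{q}/(2\pi)\rfloor$, and peel from the innermost variable $j_n$ outward. At each step the innermost sum has the form $\sum_{j=-N}^N \exp(-c(j-x)^2)$ with $c = a_k^{(n)}\log(\alpha n)$ and $x = \mu_k$ fixed by the already-summed variables, so Lemma~\ref{zeroconditionalmean} replaces $x$ by $0$ provided $c$ exceeds a universal threshold uniformly in $k$ for $n$ large. This uniformity is exactly what part~(i) of the Correlation Condition supplies: $\Gamma$ positive definite forces each principal submatrix $\Psi_{n,k}$ to be positive definite, so its $(k,k)$-Schur complement $1/a_k^{(n)}$ is positive and at most the diagonal entry $\Psi_{n,k}^{(22)} \leq \|\Psi_n\|_\infty \leq \sup_n\|\Psi_n\|_\infty < \infty$. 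Hence $a_k^{(n)} \geq 1/\sup_n\|\Psi_n\|_\infty$ uniformly in $k$ and $n$, and the threshold of Lemma~\ref{zeroconditionalmean} is met for every $k$ simultaneously once $n$ is large enough.

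After all the $\mu_k$ are zeroed out, the iterated sum is bounded by the product $\prod_{k=1}^n \sum_{j=-N}^N \exp(-a_k^{(n)}\log(\alpha n)\,j^2)$. Comparing each factor with a Gaussian integral exactly as in \eqref{small3} gives
\[
\sum_{j=-N}^N \exp\bigl(-a_k^{(n)}\log(\alpha n)\,j^2\bigr) \;\leq\; 1 + \Bigl(2 + \tfrac{1}{a_k^{(n)}\log(\alpha n)}\Bigr)(\alpha n)^{-a_k^{(n)}} \;\leq\; 1 + \bigl(2 + \sup_n\|\Psi_n\|_\infty\bigr)(\alpha n)^{-a_k^{(n)}},
\]
where the last inequality again uses $a_k^{(n)} \geq 1/\sup_n\|\Psi_n\|_\infty$ together with $\log(\alpha n) \geq 1$ for $n$ large. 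Taking the product, applying $\log(1+x)\leq x$, and invoking part~(ii) of the Correlation Condition to bound $\sum_k (\alpha n)^{-a_k^{(n)}} \leq g(\alpha)$ then delivers the claimed estimate $\exp\bigl((2+\sup_n\|\Psi_n\|_\infty)\,g(\alpha)\bigr)$. The one genuinely new ingredient beyond a verbatim transcription of Section~4.5.3 is the uniform lower bound on $a_k^{(n)}$ extracted from part~(i) of the Correlation Condition; without it, neither the threshold for Lemma~\ref{zeroconditionalmean} nor the error term $1/(a_k^{(n)}\log(\alpha n))$ in the per-coordinate estimate could be controlled uniformly across $k$, and this is the step I expect to require the most care.
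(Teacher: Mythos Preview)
Your proposal is correct and follows essentially the same route as the paper's proof: inductively decompose $y^T\Gamma y$ via \eqref{quadratic1} into $\sum_k a_k^{(n)}(y_k-\mu_k(y))^2$, apply Lemma~\ref{zeroconditionalmean} to set each $\mu_k$ to zero, bound each resulting one-dimensional sum as in \eqref{small3}, and finish with $\log(1+x)\le x$ and part~(ii) of the Correlation Condition. Your explicit justification that $1/a_k^{(n)} \le \Psi_{n,k}^{(22)} \le \sup_n\|\Psi_n\|_\infty$ (via positivity of the Schur complement term $\Psi_{n,k}^{(21)}(\Psi_{n,k}^{(11)})^{-1}\Psi_{n,k}^{(12)}$) is exactly the inequality the paper invokes, and your observation that this uniform lower bound on $a_k^{(n)}$ is what makes Lemma~\ref{zeroconditionalmean} applicable simultaneously for all $k$ once $n$ is large is a point the paper leaves implicit.
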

\begin{proof}
    The proof is similar to the one presented in Section 4.5.3.
    Again, let $N := \lfloor \dfrac{\sqrt{q}}{2\pi} \rfloor$.
    The analogous calculation as in (\ref{small1}) and (\ref{small2}) still holds, and from that we obtain that for $y = (y_1,\dots, y_{n-1})^T $,
    \[
    \sum_{y \in \cM} \exp(-\log(\alpha n)y^T\Gamma y) \leq \exp\left(\sum_{k=1}^{n} \log\left(\sum_{j=-N}^N \exp\left(-a_k^{(n)}\log(\alpha n)j^2\right)\right)\right). 
    \]
    Since 
    \[
    \frac{1}{a_k^{(n)}} = \Psi_{n,k}^{(22)} -  \Psi_{n,k}^{(21)} \left(\Psi_{n,k}^{(11)}\right)^{-1} \Psi_{n,k}^{(12)} \leq \sup_n \|\Psi_n\|_\infty < \infty,
    \]
    a similar calculation as in (\ref{small3}) gives
    \[
    \sum_{j=-N}^N \exp\left(-a_k^{(n)}\log(\alpha n)j^2\right) \leq 1 + (2 + \sup_n \|\Psi_n\|_\infty)\exp\left(-a_k^{(n)}\log(\alpha n)\right).
    \]
    Now, as in (\ref{upperbound2}), it follows that
    \begin{align*}
    \sum_{y \in \cM} \exp(-\log(\alpha n)y^T\Gamma y)
    &\leq 
    \exp\left(\left(2 + \sup_n \|\Psi_n\|_\infty \right)\sum_{k=1}^n \left(\frac{1}{\alpha n }\right)^{a_k^{(n)}}\right) \\
    &\leq \exp\left(\left(2 + \sup_n \|\Psi_n\|_\infty \right)g(\alpha)\right).
    \end{align*}
\end{proof}

\subsection{Cutoff for a Family of Random Walks on $\bbZ_q^n$}

We now summarize our generalization in Sections 3 and 5 with the complete statement of Theorem 3.

\setcounter{theorem}{2}
\begin{theorem}
\label{main2}
Let $\{ X_t \}^{(n)}$ be an arbitrary family of irreducible symmetric continuous-time Markov chains on the torus $\bbZ_q^n$ with the setup in Sections 3.1 and 5.1. 
Specifically, we define the increment of each jump mapped to $(\bbZ \cap [-q/2,q/2))^n$, $Y^{(n)}$, to have a correlation matrix $\Gamma_n$, and require $\| Y^{(n)} \|_1 \leq r = r(n)$, and $\|Y^{(n)}\|_\infty < q/2$.
Suppose that $Y^{(n)}$ has the same marginal variance by coordinate of $\dfrac{\sigma^2}{n}$, where $\sigma^2 = \sigma^2(n)$.
Let $\varepsilon \in (0,1)$ be arbitrary. 
Then we have the following
\begin{enumerate}[(a)]
    \item 
    Suppose that $\{ Y^{(n)} \}$ meets the Decay Condition and $\{ \Gamma_n \}$ meets the Correlation Condition with the bounding function $g(\alpha)$. 
    Let
    \[
    \alpha := \inf\left\{z \geq 1 : 4\varepsilon^2 \geq \exp\left( \left( 2 + \sup_n \rVert \Psi_n \rVert_\infty \right) g(z)\right) - 1\right\}.
    \]
    When $t = \dfrac{nq^2 \log(\alpha n) }{4 \pi^2 \sigma^2}\left(1 - \dfrac{r^2}{12q}\right)^{-1}$,
    \[
    d^{(n)}(t) \leq \varepsilon + o(1)
    \]
    as $n \to \infty$, $q \rightarrow \infty$, with $r^2 \ll q$.
    
    \item 
    On the other hand, as $n \to \infty$ and $q \to \infty$ with $\sup_{k \neq \ell} |\Gamma_{n, k \ell} | \ll \dfrac{1}{\log(n)}$, and $\dfrac{r^2}{q^2} \ll \dfrac{1}{\log(n)}$
    \[
    t^{(n)}_{\operatorname{mix}}(\varepsilon) \geq \frac{nq^2}{4\pi^2 \sigma^2}\left(\log(n) + \log\left(\frac{\varepsilon^{-1}-1}{5}\right)\right).
    \]
\end{enumerate}    
\end{theorem}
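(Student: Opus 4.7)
The plan is to combine the pieces already developed: part (b) is essentially a verbatim invocation of the general lower bound derived in Section 3, and part (a) is a three-step assembly that plugs the Decay Condition, the Taylor expansion of $\Phi_Y$ from Section 5.3, and Lemma \ref{upperboundsmally} into the continuous-time $\ell^2$-bound.

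For part (b), the setup here is identical to that of Section 3 (symmetric irreducible $\{Y_m\}$ on $\bbZ_q^n$ with marginal variance $\sigma^2/n$), and the stated asymptotic hypotheses $\sup_{k \neq \ell}|\Gamma_{k\ell}| \ll 1/\log(n)$ and $r^2/q^2 \ll 1/\log(n)$ match (\ref{lowerboundassumptions}) exactly. I would therefore simply quote (\ref{generallowerbound}); no new computation is needed. The only thing worth checking is that the distinguishing-statistic argument really only uses symmetry, irreducibility and the two asymptotic hypotheses, which it does.

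For part (a), I would begin from the $\ell^2$-bound (\ref{ell2bound}), which holds in this generality because the characters of $\bbZ_q^n$ diagonalize any symmetric walk (Section 3.2). Split the sum over $(\bbZ \cap [-q/2,q/2))^n$ into the two regimes $\|y\|_\infty > \sqrt{q}/(2\pi)$ and $\|y\|_\infty \leq \sqrt{q}/(2\pi)$. Since the time in part (a) satisfies
\[
t = \frac{nq^2 \log(\alpha n)}{4\pi^2 \sigma^2}\left(1 - \frac{r^2}{12q}\right)^{-1} \geq \frac{nq^2 \log(n)}{4\pi^2 \sigma^2}
\]
(using $\alpha \geq 1$ and $r^2 \ll q$), the Decay Condition applies and forces the large-$y$ contribution to $o(1)$. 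For the small-$y$ regime, the fourth-order Taylor expansion of $\Phi_Y$, combined with the deterministic estimate $|(2\pi y/q)\cdot Y| \leq r/\sqrt{q}$ that follows from $\|Y\|_1 \leq r$ and $\|y\|_\infty \leq \sqrt{q}/(2\pi)$, gives exactly the bound (\ref{eigenvaluestoquadraticform}) at the chosen time $t$, so each surviving eigenvalue term is dominated by $\exp(-\log(\alpha n)\, y^T \Gamma y)$. Lemma \ref{upperboundsmally} then bounds the resulting sum by $\exp((2 + \sup_n \|\Psi_n\|_\infty)g(\alpha))$ once $n$ is large.

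Assembling, the $\ell^2$-bound yields
\[
4 d^{(n)}(t)^2 \leq -1 + \exp\bigl((2 + \sup_n \|\Psi_n\|_\infty)g(\alpha)\bigr) + o(1),
\]
and by the defining choice of $\alpha$ the right-hand side is $\leq 4\varepsilon^2 + o(1)$, giving $d^{(n)}(t) \leq \varepsilon + o(1)$. There is no real obstacle since the work is already done; the only care required is bookkeeping the two different asymptotic regimes for the two parts (the lower bound needs the stronger $\sup|\Gamma_{k\ell}| \ll 1/\log n$ and $r^2/q^2 \ll 1/\log n$, whereas the upper bound only needs $r^2 \ll q$ together with the Decay and Correlation Conditions) and confirming that $\alpha$ is well-defined, which is guaranteed by $g(\alpha) \to 0$ as $\alpha \to \infty$ from the Correlation Condition.
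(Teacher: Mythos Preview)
Your proposal is correct and follows exactly the paper's approach: the paper presents Theorem \ref{main2} explicitly as a summary of Sections 3 and 5, with part (b) being (\ref{generallowerbound}) and part (a) assembled from the $\ell^2$-bound, the Decay Condition for $\|y\|_\infty > \sqrt{q}/(2\pi)$, the Taylor estimate (\ref{eigenvaluestoquadraticform}), and Lemma \ref{upperboundsmally} for $\|y\|_\infty \leq \sqrt{q}/(2\pi)$. Your bookkeeping observations (checking $t \geq nq^2\log(n)/(4\pi^2\sigma^2)$ so the Decay Condition applies, and that $\alpha$ is well-defined via $g(\alpha)\to 0$) are the only points that need to be made explicit, and you have them.
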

\begin{remark}
    We see by Theorem 3 any family of Markov chains $\{X_t\}^{(n)}$ in the setup of Section 5.1 meeting the Decay and Correlation Conditions with $n \rightarrow \infty$, $q \rightarrow \infty$, $\dfrac{r^2}{q^2} \ll \dfrac{1}{\log(n)}$, and $\sup_{k \neq \ell} |\Gamma_{n, k \ell} | \ll \dfrac{1}{\log(n)}$ exhibits cutoff at the time $\dfrac{nq^2 \log(n)}{4 \pi^2 \sigma^2}$, with a cutoff window of order $\dfrac{nq^2}{\sigma^2}$.
\end{remark}
\begin{remark}
    \label{cutoffremark}
    Let $\{ (S_{t}^{(n)})_{t \geq 0} \}_{n=1}^\infty$ be a family of symmetric and irreducible random walks on $\mathbb{Z}^n$ in continuous time with bounded increments, such that each coordinate is equivariant with marginal variance $\dfrac{\sigma^2}{n}$, and $S_{t}^{(n)}$ meets the correlation condition. 
    Then by the same reasoning as Example 3, we can set $q = q(n)$ large enough so that $S_{t}^{(n)} \mod q$ has cutoff at time $t = \dfrac{nq^2 \log(n)}{4 \pi^2 \sigma^2}$, assuming $\sup_{k \neq \ell} |\Gamma_{n, k \ell} | \ll \dfrac{1}{\log(n)}$.
\end{remark}

\subsection{Discussion of the Regime $q \rightarrow \infty$}

In the case of our main theorems, we require $q \gg n$ or $q \gg n^2$ for the upper bounds in order to show sufficient decay in the characteristic function of our lifted distribution $\Phi_Y$ to then bound the $\ell^2$ summation. 
This was done as a tool to gain control of the spectrum: they are sufficient (but not necessarily sharp) assumptions for the Decay Conditions to be met in the contingency table cases.
In general, the requirement of the regime of $q$ depends on how one fulfills the Decay Condition for their specific model.
For example, Nestoridi and Nguyen \cite{NestoridiNguyen} showed cutoff for the Diaconis and Gangolli walk on $n \times n$ contingency tables over $\bbZ_q$ when $q$ is small compared to $n$, and \cite{hermon2018supplementary} gives results for the simple random walk on $\bbZ_q^n$.
Hermon and Olesker-Taylor in \cite{hermon2018supplementary} also analyzed the cutoff times for random walks on the torus $\mathbb{Z}_q^n$ when $q$ is small.

For the mixing time lower bounds, the assumptions on $q$ are relatively mild and are primarily for the Taylor expansion analysis in the distinguishing statistics. 

We do like to point out that there are cases where a fast-growing $q$ relative to $n$ is required for cutoff. 
In the regime where $q$ is small, we are not necessarily able to compare the rate of decay of the spectrum of our walk to the Gaussian characteristic function.
Consider the following rate-$1$ random walk $X_t = (X_t^{(1)}, X_t^{(2)},\dots , X_t^{(n)})$ on the torus $\bbZ_q^n$.
For $j \geq 3$, each $X_t^{(j)}$ is a rate-$(1/n)$ simple random walk on $\bbZ_q$, independent of all the other coordinates, including $X_t^{(1)}$ and $X_t^{(2)}$.
Each of $X_t^{(1)}$ and $X_t^{(2)}$ performs simple random walk jumps with rate $e^{-n}/n$, independent of each other and all the remaining $n-2$ coordinates. 
In addition, with rate $(1-e^{-n})/n$, $X_t^{(1)}$ and $X_t^{(2)}$ jump simultaneously but with independent simple random walk increments. 

This dynamics has correlation matrix $I_n$, the $n \times n$ identity matrix, bounded increments, and is irreducible over the torus $\mathbb{Z}_q^n$. 
Since $X_t$ is symmetric, it converges to the uniform distribution $\pi$ on $\bbZ_q^n$. By Remark \ref{cutoffremark}, as $n \to \infty$, there is a regime of $q = q(n)$ sufficiently large with respect to $n$, so that $X_t$ has cutoff at the time $\dfrac{n q^2 \log(n)}{4 \pi^2}$ with a cutoff window of order $\mathcal{O}(n q^2)$.

The walk does not mix with cutoff in the regime when $n \rightarrow \infty$ and $q$ is even. 
In this regime, the difference of the first two coordinates $X_t^{(1)}$ and $X_t^{(2)}$ modulo $2$ is the primary bottleneck in the chain's trajectory.
To see this, let $D_t := X_t^{(1)} - X_t^{(2)} \mod 2$, which is a rate-$(2e^{-n}/n)$ simple random walk on $\bbZ_2$ as a result of $q$ being even. 
We may assume $X_0 = 0$ due to transitivity of $\bbZ_q^n$, and let $H_t$ be the transition kernel of $X_t$.
A direct calculation with the Kolmogorov forward and backward equations yields
\[
\|H_t(0, \cdot) - \pi\|_{\operatorname{TV}} \geq \bbP(D_t = 0) - \frac{1}{2} = \frac{1}{2} \exp(-4te^{-n}/n).
\]
Thus, for $\varepsilon < 1/2$, 
\begin{equation}
    \label{examplelowerbd}
    t^{(n)}_{\operatorname{mix}}(\varepsilon) \geq \frac{n e^n \log(\frac{1}{2\varepsilon})}{4}.
\end{equation}

We now give an upper bound to $t_{\operatorname{mix}}(1 - \varepsilon)$ for $\varepsilon < 1/q^2$. 
The following lemma can be proved directly using the optimal coupling definition of the total variation distance (see Proposition 4.7 of \cite{LevinPeres}).
\begin{lemma}
    \label{tvproduct}
    Let $\mu_1$ and $\mu_2$ be probability measures supported on the discrete outcome set $\Omega$ with $\rVert \mu_1 - \mu_2 \rVert_{\operatorname{TV}} = \varepsilon_1$, and let $\nu_1$ and $\nu_2$ be probability measures supported on the discrete outcome set $\Omega'$ with $\rVert \nu_1 - \nu_2 \rVert_{\operatorname{TV}} = \varepsilon_2$. The product measures $\mu_1 \times \nu_1$ and $\mu_2 \times \nu_2$ satisfy
\[
        \rVert \mu_1 \times \nu_1 - \mu_2 \times \nu_2 \rVert_{\operatorname{TV}} \leq \varepsilon_1 + \varepsilon_2 - \varepsilon_1 \varepsilon_2.
\]
\end{lemma}

We will view $X_t$ as the product chain of $(X_1, X_2)$ and each of the remaining coordinates.
It is straightforward to show that at any $t \geq 0$, the law of $(X_t^{(1)}, X_t^{(2)})$ has total variation distance at most $1-1/q^2$ with the uniform distribution of $\bbZ_q^2$.
For each $j = 3, \dots, n$, $X_t^{(j)}$ is a rate-$1/n$ simple random walk on $\bbZ_q$.
It is well-known that the spectral gap of the discrete-time transition matrix of $X_t^{(j)}$ is $\gamma := 1 - \cos(2\pi/q)$ (see, for example, 12.3.1 in \cite{LevinPeres}).
Then, by Theorem 20.6 in \cite{LevinPeres}, the total variance distance between the transition kernel of $X_t^{(j)}$ at time $t$ and the uniform distribution on $\bbZ_q$ is at most 
\[
q \cdot \exp(-\gamma t/n).
\]
Therefore, at time 
\[
t^\star = \frac{n\log(nq (1/q^2 - \varepsilon)^{-1})}{1 - \cos(2\pi/q)},
\]
the total variance distance between the transition kernel of $X_t^{(j)}$ and the uniform distribution on $\bbZ_q$ is at most $\frac{1/q^2 - \varepsilon}{n}$.
Now by Lemma \ref{tvproduct}, 
\[
d(t^\star) \leq 1 - \frac{1}{q^2} + \sum_{j = 3}^n \frac{1/q^2 - \varepsilon}{n} \leq 1 - \varepsilon.
\]
It follows that $t_{\operatorname{mix}}(1 - \varepsilon) \leq  t^\star$.
Combining this with (\ref{examplelowerbd}), we have that for $\varepsilon < 1/q^2$
\[
\frac{t^{(n)}_{\operatorname{mix}}(\varepsilon)}{t^{(n)}_{\operatorname{mix}}(1 - \varepsilon)}
\geq 
\frac{e^n \log(\frac{1}{2\varepsilon})(1 - \cos(\frac{2\pi}{q}))}{4 \log(nq(1/q^2 - \varepsilon)^{-1})} \to \infty
\]
as $n \to \infty$, which shows that $X_t$ does have cutoff with a fixed even $q$.
Note that $X_t$ does not even exhibit \textit{pre-cutoff}, which requires
\[
\sup_{0 < \varepsilon < 1/2}
    \limsup_{ n\rightarrow \infty} \frac{t^{(n)}_{\operatorname{mix}}(\varepsilon)}{t^{(n)}_{\operatorname{mix}}(1-\varepsilon)} < \infty.
\]

\section{$m\times n$ Contingency Tables}

We now consider the rate-$1$ continuous-time Diaconis-Gangolli random walk on $m \times n$ contingency tables over $\bbZ_q$, whose dynamics were introduced in Section 1.
Again, we assume $\kappa_1 n \leq m \leq \kappa_2 n$ with constants $\kappa_1, \kappa_2 > 0$.
And WLOG we assume $\kappa_1 n$ and $\kappa_2 n$ are integers.

As in the $1 \times n$ case, we will only keep track of the first $m-1$ rows and $n-1$ columns to gain irreducibility and aperiodicity.
Using the analogous notations, we denote the resulting continuous-time process on $\bbZ_q^{(m-1)(n-1)}$ by $\{X_t\}_{t\geq 0}$.
Let $\{Y_s\}_{s \in \bbN}$ be the embedded discrete-time chain of $\{X_t\}_{t\geq 0}$, and assume without loss of generality that $X_0 = Y_0 = 0$.
Again, let $Y \in \bbZ^{(n-1)^2}$ be a lattice-valued random vector with the same law as that of $Y_1$ mapped to $(\bbZ \cap [-q/2, q/2))^{(m-1)(n-1)}$.

Our proof strategy will be similar to sections 4 and 5. We will show that the Diaconis Gangolli walk on $m \times n$ contingency tables over $\bbZ_q$ meets the framework of our generalization in under the asymptotics where $n \rightarrow \infty$, $m \asymp  n$, and $q \gg n^2$. In particular, we need to show the correlation and decay conditions hold. In Section 6.1, we diagonalize and invert the correlation matrix for the walk's natural lift to $\mathbb{Z}^{(m-1) \times (n-1)}$. In section 6.2, we work directly with the characteristic function of the walk's natural lift to show the decay condition. In Section 6.3, we verify that the correlation condition additionally holds by analyzing Schur complements of $\Gamma^{-1}$.

\subsection{Inverting the Correlation Matrix}

In this subsection, we present the inverse $\Gamma^{-1} \in \mathbb{R}^{(m-1)(n-1) \times (m-1)(n-1)}$ of the correlation matrix $\Gamma \in \mathbb{R}^{(m-1)(n-1) \times (m-1)(n-1)}$ of $Y$ for later use.
Note that $\Gamma$ is given by
\begin{align*}
    \Gamma_{(i,j),(i',j')} =\begin{cases}
    1 & (i,j) = (i',j') \\
    -\frac{1}{n-1} & i = i' \text{ and } j \neq j' \\
    - \frac{1}{m-1} & i \neq i' \text{ and } j = j' \\
    \frac{1}{(m-1)(n-1)} & i \neq i' \text{ and } j \neq j'
\end{cases} .
\end{align*}
Also, $Y$ has covariance matrix $\dfrac{4}{mn} \Gamma$. 
We claim that the inverse $\Psi := \Gamma^{-1}$ of $\Gamma$ is given by
\begin{align*}
    \Psi_{(i,j),(i',j')} = \begin{cases}
    \frac{4(m-1)(n-1)}{mn} & (i,j) = (i',j') \\
    \frac{2(m-1)(n-1)}{mn} & i = i' \text{ or } j = j' \ \text{but not both}\\
    \frac{(m-1)(n-1)}{mn} & i \neq i' \text{ and } j \neq j' 
\end{cases} .
\end{align*}
To verify the claim, we will give the eigenvalues and eigenvectors of $\Gamma$ and $\Psi$. 
First, the all-one vector $\mathbbm{1}_{(m-1)(n-1)}$ is an eigenvector of eigenvalue $\dfrac{1}{(m-1)(n-1)}$ for $\Gamma$ and of eigenvalue $(m-1)(n-1)$ for $\Psi$. 
Next, if we define
\begin{align*}
    \delta_{i.}(\ell,k) := \begin{cases} -1 & \ell = 1 \\ 1 & \ell = i \\ 0 & \ell \notin \{1,i \} \end{cases}, \hspace{25pt} \delta_{.j}(\ell,k) := \begin{cases} -1 & k = 1 \\ 1 & k = j \\ 0 & k \notin \{1,j \} \end{cases},
\end{align*}
then for $2 \leq i \leq m-1$, each $\delta_{i.}$ is an eigenvector for $\Gamma$ with eigenvalue $\dfrac{m}{(m-1)(n-1)}$, and eigenvector for $\Psi$ with eigenvalue $\dfrac{(m-1)(n-1)}{m}$. Similarly, one can show that for $2 \leq j \leq n-1$, each $\delta_{.j}$ is an eigenvector for $\Gamma$ with eigenvalue $\dfrac{n}{(m-1)(n-1)}$ and an eigenvector for $\Psi$ with eigenvalue $\dfrac{(m-1)(n-1)}{n}$. 
Lastly, if we define the vectors
\begin{align*}
    \delta_{ij}(\ell,k) := \begin{cases} -1 & \left(\ell = 1 \text{ and } k = 1 \right) \text{ or } \left(\ell=i \text{ and } k = j \right) \\
    1 & \left( \ell = 1 \text{ and } k = j \right) \text{ or } \left( \ell=i \text{ and } k =1 \right) \\
    0 & \ell \notin \{1,i \} \text{ or } k \notin \{1,j \}
    \end{cases}.
\end{align*}
then each $\delta_{ij}$ is an eigenvector for eigenvalue $\dfrac{mn}{(m-1)(n-1)}$ for $\Gamma$, and an eigenvector for eigenvalue $\dfrac{(m-1)(n-1)}{mn}$ for $\Psi$, when $2 \leq i \leq m-1$ and $2 \leq j \leq n-1$. 
Hence, using the spectral decomposition of $\Gamma$ and $\Psi$, we can conclude that $\Psi = \Gamma^{-1}$.

\subsection{Bounding Decay in Characteristic Function for Large $\theta$}

In this subsection, we bound the decay of the characteristic function $\Phi_{Y}(\theta)$ of the $m\times n$ contingency table random walk in the regime $\|\theta\|_\infty > \dfrac{1}{\sqrt{q}}$.
We will eventually plug in $\theta = \dfrac{2\pi y}{q}$, $y \in (\bbZ \cap [-q/2, q/2))^{(m-1)(n-1)}$.
First, note that we can write $\binom{m}{2}\binom{n}{2} \Phi_{Y}(\theta)$ as
\begin{align*}
\sum_{\substack{i>\ell \\ j> k}} \cos(\theta_{ij} - \theta_{ik} - \theta_{\ell j} + \theta_{\ell k}) + \sum_{\substack{i>\ell \\ j}} \cos(\theta_{ij} - \theta_{\ell j}) + \sum_{\substack{j>k \\ i}} \cos(\theta_{ij} - \theta_{ik}) + \sum_{i,j} \cos(\theta_{ij}).
\end{align*}
We may assume that $\theta \in [-\pi, \pi]^{(m-1)(n-1)}$.

\subsubsection{Regime 1: $\dfrac{1}{\sqrt{q}} < \| \theta \|_\infty \leq \dfrac{\pi}{4}$}

Suppose that $\dfrac{1}{\sqrt{q}} < \| \theta \|_\infty \leq \dfrac{\pi}{4}$. 
Recall that $Y$ has covariance matrix $\dfrac{4}{mn}\Gamma$.
Since $\cos(x) \leq 1 - \dfrac{2}{25}x^2$ for $x \in [-\pi, \pi]$, by the second-order Taylor expansion of $\Phi_{Y}$, 
\begin{equation}
\label{decay00}
    \Phi_{Y}(\theta) \leq 1 - \frac{2}{25}\theta^T \left( \frac{4}{mn} \Gamma \right) \theta = 1 - \frac{8}{25 mn} \theta^T \Gamma \theta 
    \leq 
    1 - \frac{8}{25 \kappa_2 n^2} \theta^T \Gamma \theta 
\end{equation}
Recall from Section 6.1 that $\Gamma$ has eigenvalue $\dfrac{1}{(m-1)(n-1)}$ corresponding to the eigenvector $\mathbbm{1}_{(m-1)(n-1)}$, eigenvalue $\dfrac{m}{(m-1)(n-1)}$ with an eigenspace equal to $\operatorname{span}( \{ \delta_{i.} \}_{i=2}^{m-1})$, and eigenvalue $\dfrac{n}{(m-1)(n-1)}$ with an eigenspace $\operatorname{span}(\{ \delta_{.j} \}_{j=2}^{n-1} ))$.
Additionally, $\Gamma$ has an eigenvalue $\dfrac{mn}{(m-1)(n-1)}$ with an eigenspace equal to 
\[
\operatorname{span}( \{ \delta_{ij} \} | i \in \{2, \ldots, m-1\},j \in \{2,\ldots, n-1 \}).
\]
We write
\begin{align*}
    \overline{\theta} :=  \frac{\sum_{i,j} \theta_{ij}}{(m-1)(n-1)} , \hspace{15pt} \overline{\theta}_{i.} := \frac{ \sum_{j=1}^{n-1} \theta_{ij}}{n-1} , \hspace{15pt} \overline{\theta}_{.j} :=  \frac{ \sum_{i=1}^{m-1} \theta_{ij}}{m-1} . 
\end{align*} 
If $|\overline{\theta} | \geq \dfrac{1}{8 \sqrt{q}}$, then
\begin{equation}
\label{decay01}
    \theta^T \Gamma \theta \geq \frac{1}{(m-1)(n-1)} \| \overline{\theta} \mathbbm{1}_{(m-1)(n-1)} \|_2^2 = \frac{1}{64q}. 
\end{equation}
So we assume that $| \overline{\theta} | < \dfrac{1}{8 \sqrt{q}}$. 
Also, suppose that there exists either a column or row (we will first assume a row $i$) such that $| \overline{\theta}_{i.} | \geq \dfrac{1}{4\sqrt{q}}$. 
Then there must be some additional row $i'$ such that
\begin{equation}
\label{decay02}
    |\overline{\theta}_{i.} - \overline{\theta}_{i'.} | \geq |\overline{\theta}_{i.} - \overline{\theta} | \geq \frac{1}{8 \sqrt{q}}.
\end{equation}
Since $w := \delta_{i.} - \delta_{i'.}$ is an eigenvector for $\Gamma$ with eigenvalue $\dfrac{m}{(m-1)(n-1)}$, we know that the projection of $\theta$ onto the span of $w$ is equal to
\begin{align*}
    \frac{\overline{\theta}_{i.} - \overline{\theta}_{i'.}}{2} w
\end{align*}
with $\| w \|_2^2 = 2(n-1)$. 
It follows from (\ref{decay02}) that
\begin{equation}
\label{decay03}
\begin{aligned}
    \theta^T \Gamma \theta &\geq \left( \frac{\overline{\theta}_{i.} - \overline{\theta}_{i'.}}{2} \right)^2 w^T \Gamma w \\ 
    &= \left( \frac{\overline{\theta}_{i.} - \overline{\theta}_{i'.}}{2} \right)^2 \frac{m}{(m-1)(n-1)} \| w \|_2^2  \\
    & \geq \frac{1}{4} \left(\frac{1}{8 \sqrt{q}} \right)^2 \frac{2 m}{m-1} \\
    &\geq \frac{1}{128  q}.
\end{aligned}
\end{equation}
Now if there is a column $j$ such that $|\overline{\theta}_{. j}| \geq \dfrac{1}{4\sqrt{q}}$, then a similar argument gives
\begin{equation}
    \label{decay03'}
     \theta^T \Gamma \theta \geq \frac{1}{128 q}.
\end{equation}

It remains to consider the case where
\begin{align*}
    |\overline{\theta}| < \frac{1}{8\sqrt{q}}, \hspace{15pt} |\overline{\theta}_{i.}| < \frac{1}{4 \sqrt{q}} \text{ for } i \in \{1,\ldots, m \}, \quad \text{and} \quad |\overline{\theta}_{.j}| < \frac{1}{4 \sqrt{q}} \text{ for } j \in \{1,\ldots, n \}.
\end{align*}
Let $\hat{\theta}$ be the projection of $\theta$ onto $\operatorname{span} \left( \mathbbm{1}_{(m-1)(n-1)}, \{ \delta_{i.}\}_{i=1}^\infty, \{\delta_{.j} \}_{j=1}^\infty \right)$. 
It follows that
\begin{equation}
   \label{decay04} 
    \hat{\theta}(\ell,k) = \overline{\theta}_{\ell.} + \overline{\theta}_{.k} - \overline{\theta}.
\end{equation}
Notice that
\begin{equation} 
\label{decay05}
    \theta^T \Gamma \theta \geq \left(\theta - \hat{\theta} \right)^T \Gamma \left(\theta - \hat{\theta} \right) \geq \rVert \theta - \hat{\theta} \rVert^2_2.
\end{equation}
By assumptions and (\ref{decay04}), we have
\begin{equation}
    \label{decay06}
    \| \theta - \hat{\theta} \|_\infty \geq \| \theta \|_\infty - \frac{1}{4 \sqrt{q}} - \frac{1}{4 \sqrt{q}} - \frac{1}{8 \sqrt{q}} \geq  \frac{3}{8 \sqrt{q}}.
\end{equation}
Putting (\ref{decay05}) and (\ref{decay06}) together yields
\begin{equation}
    \label{decay07}
    \theta^T \Gamma \theta 
    \geq 
    \| \theta - \hat{\theta} \|_2^2 
    \geq 
    \| \theta - \hat{\theta} \|_\infty^2
     \geq
     \frac{9}{64 q}.
\end{equation}

Now with (\ref{decay00}), (\ref{decay01}), (\ref{decay03}), (\ref{decay03'}), and (\ref{decay07}), we conclude that when $\dfrac{1}{\sqrt{q}} < \| \theta \|_\infty \leq \dfrac{\pi}{4}$,
\begin{equation}
\label{decay10}
    \Phi_{Y}(\theta) \leq 1 - \frac{8}{25 \kappa_2 n^2} \frac{1}{128 q} = 1 - \frac{1}{400 \kappa_2 n^2 q}.
\end{equation}

\subsubsection{Regime 2: $\dfrac{\pi}{4} \leq \|\theta\|_\infty \leq \pi$}

We will now bound the characteristic function $\Phi_{Y}(\theta)$ for the regime where $\| \theta \|_\infty \in [\frac{\pi}{4}, \pi ]$. 
Recall that we can write $\binom{m}{2} \binom{n}{2} \Phi_{Y}(\theta)$ as
\begin{equation}
\label{tablecharfunction}
      \sum_{\substack{i>\ell \\ j> k}} \cos(\theta_{ij} - \theta_{ik} - \theta_{\ell j} + \theta_{\ell k}) + \sum_{\substack{i>\ell \\ j}} \cos(\theta_{ij} - \theta_{\ell j}) + \sum_{\substack{j>k \\ i}} \cos(\theta_{ij} - \theta_{ik}) + \sum_{i,j} \cos(\theta_{ij})
\end{equation}
Let $[n] := \{1, \dots, n\}$ and
\begin{align*}
    \mathcal{A} := \{ (i,j) \in [m - 1] \times [n - 1] : |\theta_{ij} | > 1/8 \}.
\end{align*}
Suppose that $|\cA| \geq \dfrac{(m-1)(n-1)}{32}$. Then we can write
\begin{equation}
\label{decay11}
\begin{aligned}
    \Phi_{Y}(\theta) &\leq 1 - \dfrac{ \sum_{(i,j) \in \mathcal{A}} 1- \cos(\theta_{ij})}{ {\binom{m}{2} \binom{n}{2}}} \\
    & \leq 1 - \dfrac{(m-1)(n-1) (1 - \cos(1/8))}{32  {\binom{m}{2}\binom{n}{2}}} \\
    & = 1 - \frac{1 - \cos(1/8)}{8mn},
\end{aligned}
\end{equation}
where we replaced all the elements of the characteristic function aside from the $\cos(\theta_{ij})$ terms with $(i,j) \in \mathcal{A}$ with $1$. 

Next, suppose that $|\cA| \leq \dfrac{(m-1)(n-1)}{32}$.
Pick $(i^*, j^*)$ such that $|\theta_{i^* j^*}| = \|\theta\|_\infty$.
Define
\[
\cB := \{i \in [m-1] : |\theta_{i j^*}| > 1/4\}
\]
and
\[
\cC := \{j \in [n-1] : |\theta_{i^* j}| > 1/4\}.
\]
Suppose $|\cB| \leq \dfrac{m-1}{2}$ and $|\cC| \leq \dfrac{n-1}{2}$.
We look at the terms in (\ref{tablecharfunction}) of the form $\cos(\theta_{ij} - \theta_{ik} - \theta_{\ell j} + \theta_{\ell k})$.
We would replace all the terms by $1$ except those of the form $\cos(\theta_{i^* j^*} - \theta_{i^* j} - \theta_{i j^*} + \theta_{ij})$.
There are at least $\dfrac{(m-1)(n-1)}{4}$ terms of this form with $i \in \cB^c$ and $j \in \cC^c$.
Since $|\cA| \leq \dfrac{(m-1)(n-1)}{32}$, there are at least $\dfrac{7(m-1)(n-1)}{32}$ of terms $\cos(\theta_{i^* j^*} - \theta_{i^* j} - \theta_{i j^*} + \theta_{ij})$ such that $i \in \cB^c, j\in \cC^c$, and $|\theta_{ij}| \leq 1/8$.
Therefore,
\begin{equation}
    \label{decay12}
    \begin{aligned}
        \Phi_{Y}(\theta) 
        &\leq 1 - \frac{\sum 1 - \cos(\theta_{i^* j^*} - \theta_{i^* j} - \theta_{i j^*} + \theta_{ij})}{\binom{m}{2}\binom{n}{2}} \\
        &\leq 1 - \frac{\frac{7}{32} (m-1)(n-1) (1 - \cos(\pi/4 - 5/8))}{\binom{m}{2}\binom{n}{2}} \\
        &= 1 - \frac{7(1 - \cos(\pi/4 - 5/8))}{8mn}
    \end{aligned}
\end{equation}
where on the first line, the summation ranges over all the pairs $(i,j)$ such that $i \in \cB^c, j\in \cC^c$, and $|\theta_{ij}| \leq 1/8$.
Now, we suppose that $|\cA| \leq \dfrac{(m-1)(n-1)}{32}$ and $|\cC| > \dfrac{n-1}{2}$.
Note that the case of $|\cA| \leq \dfrac{(m-1)(n-1)}{32}$ and $|\cB| > \dfrac{m-1}{2}$ would follow in an analogous way.
We look at the terms in (\ref{tablecharfunction}) of the form $\cos(\theta_{i^* j} - \theta_{\ell j})$ where $j \in \cC$ and $\ell \neq i^*$.
There are at least $\dfrac{(n-1)(m-2)}{2} \geq \dfrac{(m-1)(n-1)}{4}$ pairs of $(\ell,j)$ such that $j \in \cC$ and $\ell \neq i^*$.
So, there are at least $\dfrac{7(m-1)(n-1)}{32}$ pairs of $(\ell,j)$ such that $j \in \cC$, $\ell \neq i^*$, and $|\theta_{\ell j}| \leq 1/8$.
Therefore,
\begin{equation}
    \label{decay13}
    \begin{aligned}
        \Phi_{Y}(\theta)
        &\leq 1 - \frac{\sum 1 - \cos(\theta_{i^* j} - \theta_{\ell j})}{\binom{m}{2}\binom{n}{2}} \\
        &\leq 1 - \frac{\frac{7}{32}(m-1)(n-1)(1-\cos(1/8))}{\binom{m}{2}\binom{n}{2}} \\
        &= 1 - \frac{7(1 - \cos(1/8))}{8mn}
    \end{aligned}
\end{equation}
where the summation ranges over all the pairs $(\ell,j)$ such that $j \in \cC$, $\ell \neq i^*$, and $|\theta_{\ell j}| \leq 1/8$.

\subsubsection{Combining the Two Regimes of Large $\theta$}

Now since $m \leq \kappa_2 n$, combining (\ref{decay10}), (\ref{decay11}), (\ref{decay12}), and (\ref{decay13}), we conclude that for all large $n$ and $q$,
\begin{equation}
    \label{decay14}
    \Phi_{Y}(\theta) \leq 1 - \frac{1}{400 \kappa_2 n^2 q}
\end{equation}
when $\theta \in [-\pi, \pi]^{(m-1)(n-1)}$ satisfies $\|\theta\|_\infty > \dfrac{1}{\sqrt{q}}$.
So for $y \in (\bbZ \cap [-q/2, q/2))^{(m-1)(n-1)}$ with $\|y\|_\infty > \dfrac{\sqrt{q}}{2\pi}$,
\begin{equation}
\label{decay15}
    \exp\left(2t\left(\Phi_{Y}\left(\frac{2\pi y}{q}\right) - 1\right)\right) \leq \exp\left(-\frac{t}{200 \kappa_2 n^2 q}\right).
\end{equation}
By (\ref{generalmixingtime}) with $r = 4$, $\sigma^2 = \dfrac{4(m-1)(n-1)}{mn}$, the mixing time is 
\begin{equation}
\label{n*nmixingtime}
  t = \dfrac{m n q^2 \log(\alpha (m-1)(n-1))}{16 \pi^2}\left(1 - \dfrac{4}{3q}\right)^{-1},
\end{equation}
where $\alpha \geq 1$ is independent of $n$.
At the mixing time, since $\alpha \geq 1$ and $m\geq \kappa_1 n$, we can write $(\ref{decay15})$ as
\begin{equation}
    \label{decay16}
    \exp\left(2t\left(\Phi_{Y}\left(\frac{2\pi y}{q}\right) - 1\right)\right) \leq \left((m-1)(n-1)\right)^{-\dfrac{\kappa_1 q}{3200 \kappa_2 \pi^2}}
\end{equation}
when $\|y\|_\infty \geq \dfrac{\sqrt{q}}{2\pi}$.
Similar to (\ref{elementcount}), the number of $y \in (\bbZ \cap [-q/2, q/2))^{(m-1)(n-1)}$ with $\|y\|_\infty > \dfrac{\sqrt{q}}{2\pi}$ is $\cO(q^{(m-1)(n-1)})$.
Thus if $q \gg m n$, then at the mixing time (\ref{n*nmixingtime}), we conclude that as $n \to \infty$,
\begin{equation}
    \label{decay17}
    \sum_{\|y\|_\infty > \frac{\sqrt{q}}{2\pi}}\exp\left(2t\left(\Phi_{Y}\left(\frac{2\pi y}{q}\right) - 1 \right)\right) \to 0.
\end{equation}

\subsection{Bounding Decay in Characteristic Function for Small $\theta$}

In this subsection, we bound the decay of the characteristic function $\Phi_{Y}(\theta)$ in the regime $\|\theta\|_\infty \leq \dfrac{1}{\sqrt{q}}$. 
\subsubsection{The Schur Complement of a Submatrix of $\Psi$}
For convenience, we write $A := \dfrac{mn}{(m-1)(n-1)} \Psi$ and thus
\begin{align*}
    A_{(i,j),(i',j')} = \begin{cases}
    4 & (i,j) = (i',j') \\
    2 & i = i' \text{ or } j = j' \ \text{but not both} \\
    1 & i \neq i' \text{ and } j \neq j' 
\end{cases}.
\end{align*}
Fix any $\ell \in [m-1], k \in [n-1]$.
Let $A^* = A^*(\ell,k)$ be the principal submatrix of $A$ that only contains indices from the first $\ell$ rows and $k$ columns of the contingency table. 
We further partition $A^*$ as
\begin{align*}
    A^* = \begin{bmatrix} A^*_{11} & A^*_{12} \\ A^*_{21} & A^*_{22} \end{bmatrix}
\end{align*}
where $A'_{11}$ is the principal submatrix containing all indices except $(\ell,k)$, and $A'_{22} = 4$ corresponds to index $(\ell,k)$. 
We have the following lemma.
\begin{lemma}
\label{shurcomplement}
Let $A^* = A^*(\ell, k)$ be as above.
    Then the Schur complement is    
    \begin{align*}
        A^*_{22} - A^*_{21} (A^*_{11})^{-1} A^*_{12} = 1 + \frac{\ell + k + 1}{\ell k}.
    \end{align*}
\begin{proof}
    Define the vector $v \in \bbR^{\ell k - 1}$ by
    \begin{align*}
    v_{(i,j)} := \begin{cases} 1/k & i = \ell \text{ and } j \neq k\\
    1/\ell & i \neq \ell \text{ and } j = k \\
    -\dfrac{1}{\ell k} & i \neq \ell \text{ and } j \neq k 
    \end{cases} \end{align*}
for $1 \leq i \leq \ell$, $1 \leq j \leq k$, $(i,j) \neq (\ell,k)$. 
We will show that $A^*_{11} v = A^*_{12}$, and consequently, $(A^*_{11})^{-1} A^*_{12} = v$.

We break the calculation into three cases. 
First, we consider an index $(i,k)$, $i \neq \ell$. 
As indicated in Figure \ref{figure3}, within the first $\ell$ rows and $k$ columns of the contingency table, there exist $\ell-2$ indices in the $k$th column and a row not the $i$th or the $\ell$th, $k-1$ indices in the $i$th row and a column not the $k$th, $k-1$ indices in the $\ell$th row and a column not the $k$th, and $(k-1)(\ell-2)$ indices that are neither in the $i$th or the $\ell$th row, nor in the $k$th column. 
A direct calculation then yields
\begin{align*}
    (A^*_{11} v)_{(i,k)} &= \frac{4}{\ell} + \frac{2(\ell-2)}{\ell} - \frac{2(k-1)}{\ell k} + \frac{k-1}{k} - \frac{(k-1)(\ell-2)}{\ell k}
    = 2.
\end{align*}

\vspace{5pt}
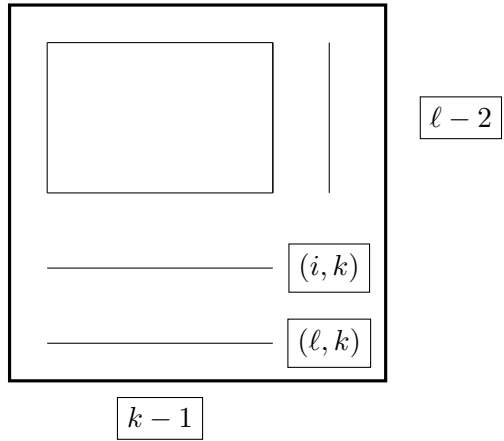
\begin{figure}[H]
\centering
\begin{tikzpicture}
\draw[black, very thick] (0,0) rectangle (5,5);
\node[draw] at (4.25,0.5) {$(\ell, k)$};
\node[draw] at (4.25,1.5) {$(i,k)$};
\draw (4.25,2.5) -- (4.25,4.5);
\draw (0.5,0.5) -- (3.5,0.5);
\draw (0.5,1.5) -- (3.5,1.5);
\draw (0.5,2.5) -- (3.5,2.5);
\draw (3.5,2.5) -- (3.5,4.5);
\draw (3.5,2.5) -- (3.5,4.5);
\draw (0.5,4.5) -- (3.5,4.5);
\draw (0.5,4.5) -- (0.5,2.5);
\node[draw] at (2,-0.5) {$k-1$};
\node[draw] at (6,3.5) {$\ell-2$};
\end{tikzpicture}
\caption{
Diagram for an index $(i,k)$ with a shared $k$th column with $(\ell, k)$ in the submatrix containing the first $\ell$ rows and $k$ columns of the $m \times n$ contingency table.  
Note that this diagram is only for counting purposes, and the rows and columns may have been permuted.}
\label{figure3}
\end{figure}

Next, we deal with the case with an index $(\ell,j)$ with $j \neq k$. 
Analogously, in the first $\ell$ rows and $k$ columns of the contingency table, there are $k-2$ indices in the $\ell$th row and a column not the $j$th or the $k$th, $\ell-1$ indices in the $k$th column and a row not the $\ell$th, $\ell-1$ indices in the $j$th column and a row not the $\ell$th, and $(\ell-1)(k-2)$ indices neither in the $\ell$th row nor in the $k$th or the $j$th column. 
Therefore,
\[
    (A^*_{11} v)_{(\ell,j)} = \frac{4}{k} + \frac{2(k-2)}{k} - \frac{2(\ell-1)}{\ell k} + \frac{\ell-1}{\ell} - \frac{(k-2)(\ell-1)}{\ell k} 
    = 2.
\]
In the last case, we check for an index $(i,j)$ with $i \neq \ell$, $j \neq k$. 
As in Figure \ref{figure4}, in the first $\ell$ rows and $k$ columns of the contingency table, there are $\ell-2$ indices in the $k$th column and a row not the $\ell$th or the $i$th, $k-2$ indices in the $\ell$th row and a column not the $k$th or the $j$th, $\ell-2$ indices in the $j$th column and a row not the $\ell$th or the $i$th, $k-2$ indices in the $i$th row and a column not the $k$th or the $j$th, and $(\ell-2)(k-2)$ indices that are neither in the $\ell$th or $i$th row nor in the $k$th or the $j$th column. This then gives
\begin{align*}
    (A^*_{11} v)_{(i,j)} 
    &= -\frac{4}{\ell k} + \frac{2}{\ell} + \frac{2}{k} + \frac{\ell-2}{\ell} + \frac{k-2}{k} - \frac{2(\ell-2)}{\ell k} - \frac{2(k-2)}{\ell k} - \frac{(\ell-2)(k-2)}{\ell k} \\
    &= 1.
\end{align*}

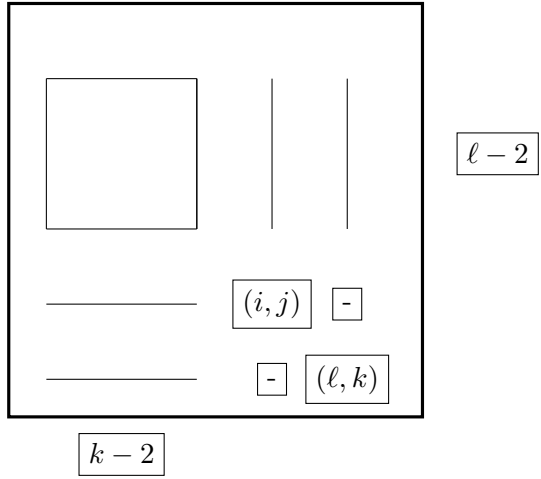
\begin{figure}[H]
\centering
\begin{tikzpicture}
\draw[black, very thick] (0,0) rectangle (5.5,5.5);
\node[draw] at (4.5,0.5) {$(\ell, k)$};
\node[draw] at (3.5,1.5) {$(i,j)$};
\draw (4.5,2.5) -- (4.5,4.5);
\draw (3.5,2.5) -- (3.5,4.5);
\draw (0.5,0.5) -- (2.5,0.5);
\draw (0.5,1.5) -- (2.5,1.5);
\draw (0.5,2.5) -- (2.5,2.5);
\draw (2.5,2.5) -- (2.5,4.5);
\draw (2.5,2.5) -- (2.5,4.5);
\draw (0.5,4.5) -- (2.5,4.5);
\draw (0.5,4.5) -- (0.5,2.5);
\node[draw] at (1.5,-0.5) {$k-2$};
\node[draw] at (6.5,3.5) {$\ell-2$};
\node[draw] at (4.5,1.5) {-};
\node[draw] at (3.5,0.5) {-};
\end{tikzpicture}
\caption{Diagram for an index $(i,j)$ that does not share a row or a column with $(\ell, k)$ in the submatrix containing the first $\ell$ rows and $k$ columns of the $m \times n$ contingency table.  
Note that this diagram is only for counting purposes, and the rows and columns may have been permuted.}
\label{figure4}
\end{figure}

Therefore, we have $A^*_{11}v = A^*_{12}$ and thus $(A^*_{11})^{-1} A^*_{12} = v$. 
Now, since there are $k-1$ indices with row $\ell$ other than $(\ell,k)$, and $\ell-1$ indices with column $k$ other than $(\ell,k)$, a direct computation yields
\begin{align*}
    A^*_{21} v = \frac{2(\ell-1)}{\ell} + \frac{2(k-1)}{k} - \frac{(k-1)(\ell-1)}{\ell k} 
    = 3 + \frac{-\ell -k - 1}{\ell k}.
\end{align*}
Hence,
\begin{align*}
    A^*_{22} - A^*_{21} (A^*_{11})^{-1} A^*_{12} 
    = A^*_{22} - A^*_{21} v
    = 4 - \left(3 + \frac{-\ell-k-1}{\ell k}\right)
    = 1 + \frac{\ell + k + 1}{\ell k}.
\end{align*}
\end{proof}
\end{lemma}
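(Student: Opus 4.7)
The plan is to recognize the matrix $A^* = A^*(\ell, k)$ as a Kronecker product, which reduces the Schur complement computation to two scalar identities. Comparing entries one at a time, one checks that
\[
A^*(\ell, k) = (I_\ell + J_\ell) \otimes (I_k + J_k),
\]
where $J_m$ denotes the $m \times m$ all-ones matrix and $I_m$ the $m \times m$ identity. Indeed, under the lexicographic ordering of pairs $(i,j)$, the $((i,j),(i',j'))$ entry of the right-hand side is $(I_\ell + J_\ell)_{ii'}(I_k + J_k)_{jj'}$, which evaluates to $2 \cdot 2 = 4$, $2 \cdot 1 = 2$, $1 \cdot 2 = 2$, or $1 \cdot 1 = 1$ according to the four cases defining $A^*$.

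Next I would apply the standard block-inversion identity: for a block matrix $M = \begin{pmatrix} M_{11} & M_{12} \\ M_{21} & M_{22} \end{pmatrix}$ with $M_{11}$ invertible and $M_{22}$ a scalar, the trailing diagonal entry of $M^{-1}$ equals $(M_{22} - M_{21} M_{11}^{-1} M_{12})^{-1}$. Taking $M = A^*$ and placing the index $(\ell, k)$ in the trailing block, this reduces the desired Schur complement to the reciprocal of the $((\ell,k),(\ell,k))$ entry of $(A^*)^{-1}$.

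The Kronecker structure now makes this entry transparent: $(A^*)^{-1} = (I_\ell + J_\ell)^{-1} \otimes (I_k + J_k)^{-1}$. A quick verification using $J_m^2 = m J_m$ yields $(I_m + J_m)^{-1} = I_m - \frac{1}{m+1} J_m$, so every diagonal entry of $(I_m + J_m)^{-1}$ equals $\frac{m}{m+1}$. Consequently the target corner entry of $(A^*)^{-1}$ is $\frac{\ell}{\ell+1} \cdot \frac{k}{k+1}$, and taking the reciprocal gives
\[
A^*_{22} - A^*_{21}(A^*_{11})^{-1} A^*_{12} = \frac{(\ell+1)(k+1)}{\ell k} = 1 + \frac{\ell + k + 1}{\ell k},
\]
which is the claim.

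I do not anticipate any serious obstacle. The only genuinely non-routine step is spotting the Kronecker decomposition of $A^*$; once that is in hand, the remaining pieces (the block-inversion identity and the explicit inverse of $I_m + J_m$) are standard one-liners, and there is no combinatorial case analysis to manage.
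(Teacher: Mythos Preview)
Your argument is correct and genuinely different from the paper's. The paper proceeds by writing down an explicit candidate $v$ for $(A^*_{11})^{-1}A^*_{12}$ and then verifies $A^*_{11}v = A^*_{12}$ entry by entry, splitting into three cases according to whether the index shares a row, a column, or neither with $(\ell,k)$; it then computes $A^*_{21}v$ directly. Your route bypasses all of this combinatorial bookkeeping by observing the tensor factorization $A^* = (I_\ell+J_\ell)\otimes(I_k+J_k)$, which reduces the Schur complement to reading off a single diagonal entry of $(A^*)^{-1}$ and multiplying two known scalars. The paper's approach has the minor advantage of producing the vector $v$ explicitly (though it is not reused later), while your approach is substantially shorter, structurally transparent, and explains \emph{why} the answer factors as $\frac{(\ell+1)(k+1)}{\ell k}$. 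The only point worth making explicit is that $A^*_{11}$ is invertible so that the block-inverse formula applies; this follows at once since $I_m+J_m$ is positive definite and hence so is $A^*$ and every principal submatrix.
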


\subsubsection{The Correlation Condition}
We now show that the correlation matrices $\Gamma_{n}$ for the random walk on the $m \times n$ contingency table meet the Correlation Condition.

\begin{lemma}
\label{n*ncorrelationcondition}
    Let $(\Gamma_n)_{n=1}^\infty$ be the sequence of correlation matrices corresponding to the contingency table walk on an $n \times n$ table. 
    Then $\Gamma_n$ meets the Correlation Condition with the bounding function
    \begin{align*}
        g(\alpha) = \frac{\kappa_2^2(1 + 2e^{4/e} + 2e^{8/e} + e^{32/e})}{\kappa_1 \alpha^{1/4}}.
    \end{align*}
    \begin{proof}
        Recall from Section 6.1 that $\Gamma_{n}^{-1} = \Psi_n$ with
\begin{align*}
    \Psi_n((i,j),(i',j')) = \begin{cases}
    \frac{4(m-1)(n-1)}{mn} & (i,j) = (i',j') \\
    \frac{2(m-1)(n-1)}{mn} & i = i' \text{ or } j = j' \ \text{but not both} \\
    \frac{(m-1)(n-1)}{mn} & i \neq i' \text{ and } j \neq j' 
\end{cases}.
\end{align*}
It follows that $\sup_n \| \Psi_n \|_\infty = 4$ which confirms condition (i).
In order to check condition (ii), we first specify an ordering of the elements of $\Psi_n$ in order to compute each respective Schur complement:
the first $n-1$ elements will consist of the first row of the table (incrementing by column index), second $n-1$ elements will consist of row two in the same order, continuing for each subsequent row. 

Now, suppose that we are at an index $(\ell,k)$ of the contingency table such that $\ell > 1$, $k > 1$. 
The correlation matrix $\Psi_{n,(\ell,k)}$ corresponds to all the first $\ell-1$ rows and the first $k$ entries of row $\ell$ in the contingency table. 
We call the intersection between the first $\ell$ rows and $k$ columns in the contingency table the \textit{interior region}. 
See Figure \ref{figure5} below.

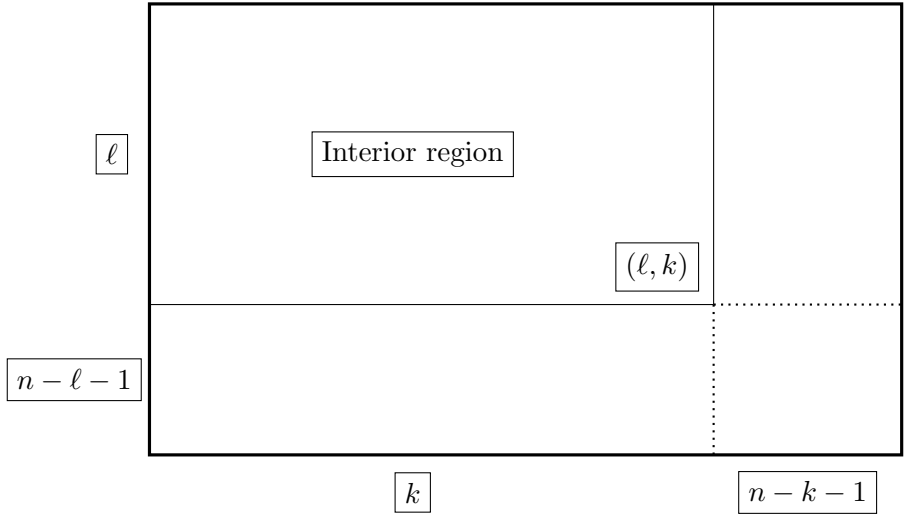
\begin{figure}[H]
\centering
\begin{tikzpicture}
\draw[black, very thick] (-2,-2) rectangle (8,4);
\draw (5.5,4) -- (5.5,0);
\draw (-2,0) -- (5.5,0);
\draw[dotted, thick] (5.5, 0) -- (5.5, -2);
\draw[dotted, thick] (5.5, 0) -- (8, 0);
\node[draw] at (4.75,0.5) {$(\ell, k)$};
\node[draw] at (1.5,2) {Interior region};
\node[draw] at (1.5,-2.5) {$k$};
\node[draw] at (-2.5,2) {$\ell$};
\node[draw] at (-3,-1) {$m-\ell-1$};
\node[draw] at (6.75,-2.5) {$n-k-1$};
\end{tikzpicture}
\caption{Diagram for the \textit{interior region} formed by all the entries in first $\ell$ rows and $k$ columns in the contingency table.}
\label{figure5}
\end{figure}

Let $M_{n,(\ell,k)}$ be the principal submatrix of $\Psi_{n,(\ell,k)}$ which contains all the corresponding indices in the interior region, and partition \begin{align*}
    M_{n,(\ell,k)} = \begin{bmatrix} M_{n,(\ell,k)}^{(11)} & M_{n,(\ell,k)}^{(12)} \\ M_{n,(\ell,k)}^{(21)} & M_{n,(\ell,k)}^{(22)} \end{bmatrix}
\end{align*}
so that $M_{n,(\ell,k)}^{(22)} \in \bbR$ corresponds to the element $(\ell,k)$. 
Then, since $M_{n,(\ell,k)}$ is a principal submatrix of $\Psi_{n,(\ell,k)}$, by a similar decomposition as (\ref{quadratic1}),
\begin{align*}
    M_{n,(\ell,k)}^{(22)} -  M_{n,(\ell,k)}^{(21)} \left( M_{n,(\ell,k)}^{(11)}\right)^{-1}  M_{n,(\ell,k)}^{(12)} \geq  \Psi_{n,(\ell,k)}^{(22)} -  \Psi_{n,(\ell,k)}^{(21)} \left( \Psi_{n,(\ell,k)}^{(11)}\right)^{-1}  \Psi_{n,(\ell,k)}^{(12)}.
\end{align*}
And by Lemma \ref{shurcomplement}, we know
\begin{align*}
    M_{n,(\ell,k)}^{(22)} -  M_{n,(\ell,k)}^{(21)} \left( M_{n,(\ell,k)}^{(11)}\right)^{-1}  M_{n,(\ell,k)}^{(12)} = \left(\frac{(m-1)(n-1)}{mn}\right)\left(1 + \frac{\ell + k + 1}{\ell k}\right).
\end{align*}
Thus
\begin{align*}
    a_{\ell,k}^{(n)} :=
    \left( \Psi_{n,(\ell,k)}^{(22)} -  \Psi_{n,(\ell,k)}^{(21)} \left( \Psi_{n,(\ell,k)}^{(11)}\right)^{-1}  \Psi_{n,(\ell,k)}^{(12)} \right)^{-1} \geq \frac{\ell k}{(\ell+1)(k+1)} .
\end{align*}

Now, if $\ell=1$ or $k=1$, say $\ell = 1$, then note that $\Psi_{n,(1,k)} = \dfrac{2(m-1)(n-1)}{mn}(I_k + J_k)$.
So by (\ref{quadratic5}),
\begin{align*}
    a_{\ell,k}^{(n)} =
    \left( \Psi_{n,(\ell,k)}^{(22)} -  \Psi_{n,(\ell,k)}^{(21)} \left( \Psi_{n,(\ell,k)}^{(11)}\right)^{-1}  \Psi_{n,(\ell,k)}^{(12)} \right)^{-1} \geq \frac{\ell k}{(\ell+1)(k+1)}
\end{align*} 
when $\ell = 1$ or $k = 1$.
Using these results, we can write
\begin{align*}
    \sum_{\ell=1}^{m-1} \sum_{k=1}^{n-1} \left( \frac{1}{\alpha (m-1)(n-1)} \right)^{a_{\ell,k}^{(n)}} &\leq \sum_{\ell=1}^{m-1} \sum_{k=1}^{n-1} \left( \frac{1}{\alpha (m-1)(n-1)} \right)^{\frac{\ell k}{(\ell+1)(k+1)}} \\
    & \leq \frac{1}{\alpha^{1/4}} \sum_{\ell=1}^{m-1} \sum_{k=1}^{n-1} \left( \frac{1}{mn} \right)^{\frac{\ell k}{(\ell+1)(k+1)}} \\
    & \leq \frac{\kappa_2^2(1 + 2e^{4/e} + 2e^{8/e} + e^{32/e})}{\kappa_1 \alpha^{1/4}}
\end{align*}
where the last inequality follows by Lemma \ref{doublesummation}.
    \end{proof}
\end{lemma}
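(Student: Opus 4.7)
The plan is to verify the two components of the Correlation Condition in turn. Part (i), namely $\sup_n \|\Psi_n\|_\infty < \infty$, is immediate from the closed form of $\Psi_n$ derived in Section 6.1, since every entry is bounded in absolute value by $4(n-1)^2/n^2 < 4$, independent of $n$. The real content is in part (ii), and for this I would fix a row-by-row ordering on the $(n-1)^2$ table indices so that $\Psi_{n,(\ell,k)}$ denotes the principal submatrix consisting of all indices in the first $\ell-1$ complete rows together with the first $k$ entries of row $\ell$.

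The key idea for bounding each $a_{\ell,k}^{(n)}$ from below is to reduce the computation to a more symmetric principal submatrix, namely the \emph{interior region} matrix $M_{n,(\ell,k)}$ formed by all indices inside the first $\ell$ rows and first $k$ columns of the table. Since $M_{n,(\ell,k)}$ is a principal submatrix of $\Psi_{n,(\ell,k)}$ and $\Psi_n$ is positive definite, the monotonicity of Schur complements (conditioning on more variables cannot increase the residual variance of the target coordinate) yields
\[
\Psi_{n,(\ell,k)}^{(22)} - \Psi_{n,(\ell,k)}^{(21)}\bigl(\Psi_{n,(\ell,k)}^{(11)}\bigr)^{-1}\Psi_{n,(\ell,k)}^{(12)} \;\leq\; M_{n,(\ell,k)}^{(22)} - M_{n,(\ell,k)}^{(21)}\bigl(M_{n,(\ell,k)}^{(11)}\bigr)^{-1}M_{n,(\ell,k)}^{(12)}.
\]
Applying Lemma \ref{shurcomplement} (after the rescaling $A = \tfrac{n^2}{(n-1)^2}\Psi$) evaluates the right-hand side as $\bigl(\tfrac{n-1}{n}\bigr)^2 (\ell+1)(k+1)/(\ell k)$, which after taking reciprocals delivers the bound $a_{\ell,k}^{(n)} \geq \ell k/((\ell+1)(k+1))$. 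For the boundary cases with $\ell=1$ or $k=1$, the relevant submatrix is $\tfrac{2(n-1)^2}{n^2}(I_k + J_k)$, whose Schur complement can be read off directly using the formulas from Section 4.5.2, recovering the same lower bound (in fact with equality).

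Once this lower bound on $a_{\ell,k}^{(n)}$ is in hand, I would use $\ell k/((\ell+1)(k+1)) \geq 1/4$ together with $\alpha \geq 1$ to write
\[
\sum_{\ell=1}^{n}\sum_{k=1}^{n} \left(\frac{1}{\alpha n^2}\right)^{a_{\ell,k}^{(n)}} \;\leq\; \alpha^{-1/4}\sum_{\ell=1}^{n}\sum_{k=1}^{n}\left(\frac{1}{n^2}\right)^{\ell k/((\ell+1)(k+1))},
\]
and then invoke Lemma \ref{doublesummation} to control the remaining double sum by $1 + 2e^{4/e} + 2e^{8/e} + e^{32/e}$, producing the claimed bounding function $g(\alpha)$.

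The step I expect to be the main obstacle is the monotonicity inequality for Schur complements of principal submatrices of $\Psi_n$, since the two submatrices involved are of quite different shapes. One clean justification is to realize the $(\ell,k)$-entry Schur complement as $\Var\bigl(Z_{(\ell,k)} \mid \{Z_{(\ell',k')}\}_{(\ell',k') \in J}\bigr)$ for a centered Gaussian $Z$ with covariance $\Psi_n$, so enlarging the conditioning index set $J$ can only shrink the variance. Alternatively, the same monotonicity can be extracted from the block decomposition identity underlying (\ref{quadratic1}) by observing that the Schur complement equals the minimum of a quadratic form over an affine subspace, and minimizing over a larger subspace produces a smaller value.
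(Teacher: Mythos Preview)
Your proposal is correct and follows essentially the same route as the paper: verify $\sup_n\|\Psi_n\|_\infty\le 4$ from the explicit form of $\Psi_n$, impose the row-by-row ordering, pass to the interior-region submatrix $M_{n,(\ell,k)}$ via Schur-complement monotonicity, apply Lemma~\ref{shurcomplement} to get $a_{\ell,k}^{(n)}\ge \ell k/((\ell+1)(k+1))$, handle the $\ell=1$ boundary case directly, and finish with Lemma~\ref{doublesummation} after extracting the factor $\alpha^{-1/4}$. Your two justifications of the monotonicity step (Gaussian conditional variance, variational characterization of the Schur complement) are in fact more explicit than the paper's reference to the block decomposition~(\ref{quadratic1}), but the argument is otherwise identical.
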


Now, recall from (\ref{n*nmixingtime}) that the mixing time is 
\[
  t = \dfrac{m n q^2 \log(\alpha (m-1)(n-1))}{16 \pi^2}\left(1 - \dfrac{4}{3q}\right)^{-1}.
\]
At the mixing time, by (\ref{eigenvaluestoquadraticform}), Lemma \ref{upperboundsmally}, and Lemma \ref{n*ncorrelationcondition},
\begin{equation}
\label{n*nsmall}
\sum_{y \in \cM} \exp\left(2t\left(\Phi_{Y}\left(\frac{2 \pi y}{q}\right) - 1\right)\right) \leq \exp\left(\frac{\kappa_2^2(6 + 12e^{4/e} + 12e^{8/e} + 6e^{32/e})}{\kappa_1 \alpha^{1/4}}\right).
\end{equation}

\subsection{Mixing Time Upper Bound}

We will conclude the mixing time upper bound for the random walks on $m\times n$ contingency tables.
For any $\varepsilon \in (0,1)$, set 
\[
\alpha := \inf\left\{z \geq 1 : 4\varepsilon^2 \geq \exp\left(\frac{\kappa_2^2(6 + 12e^{4/e} + 12e^{8/e} + 6e^{32/e})}{\kappa_1 z^{1/4}}\right) - 1\right\}.
\]
By the $\ell^2$-bound, (\ref{decay17}), and (\ref{n*nsmall}), when $t = \dfrac{m n q^2 \log(\alpha (m-1)(n-1))}{16 \pi^2}\left(1 - \dfrac{4}{3q}\right)^{-1}$,
\[
d(t) \leq \varepsilon + o(1)
\]
as $n \to \infty$.

\subsection{Mixing Time Lower Bound}

We will now compute the mixing time lower bound for the $m \times n$ contingency table walk in continuous time. We showed that the discrete-time increment $Y$ has equivariant coordinates, with each coordinate having a marginal variance of $\dfrac{4}{mn}$. 
We also note that the off-diagonal entries of the correlation matrix take values of $\dfrac{-1}{m-1}, \dfrac{-1}{n-1}$, or $\dfrac{1}{(m-1)(n-1)}$. 
Since at each jump the $m \times n$ contingency table random walk updates at most four coordinates, each by $\pm 1$, we have that $\|Y\|_1 \leq 4$. 
Therefore, applying (\ref{generallowerbound}) using that $\kappa_1 n \leq m \leq \kappa_2 n$, and setting $r = 4$, $\sigma^2 = \dfrac{4(m-1)(n-1)}{mn}$, we obtain
\[
    t_{\operatorname{mix}}(\varepsilon) \geq \frac{mn q^2}{16\pi^2}\left(\log\left( (m-1)(n-1) \right) + \log\left(\frac{\varepsilon^{-1}-1}{5}\right)\right)
\]
for all $n$ large enough, assuming $q \gg \sqrt{\log n}$.

Now combining Sections 6.4 and 6.5 completes the proof of Theorem \ref{main3}.

\section{Acknowledgments}

The authors thank David Sivakoff for valuable discussions, feedback, and review of the manuscript.

ZF is supported by the Department of Mathematics of The Ohio State University, and AH is supported by the Department of Statistics of The Ohio State University, both via graduate teaching associateship.

\bibliographystyle{amsplain}
\bibliography{refs}
\end{document}